\documentclass[11pt,a4]{article}
\usepackage{latexsym}
\usepackage{amsmath}
\usepackage{amssymb}
\usepackage{amsthm}
\usepackage{amscd}
\usepackage{color}

\definecolor{dblue}{rgb}{0,0,0.45}
\definecolor{red}{rgb}{0.7,0,0}

\setlength{\topmargin}{0in}
\setlength{\oddsidemargin}{0.35in}
\setlength{\evensidemargin}{0.35in}
\setlength{\textwidth}{5.7in}
\setlength{\textheight}{8.7in}

\newtheorem{theorem}{Theorem}[section]
\newtheorem{lemma}[theorem]{Lemma}
\newtheorem{corollary}[theorem]{Corollary}
\newtheorem{proposition}[theorem]{Proposition}

\theoremstyle{definition}

\newtheorem{remark}[theorem]{Remark}
\newtheorem{definition}[theorem]{Definition}

\theoremstyle{remark}





\begin{document}

\title{Density, Duality and Preduality in Grand Variable Exponent Lebesgue and Morrey Spaces}
\author{Alexander Meskhi
\footnote{Alexander Meskhi:
(a) Department of Mathematical Analysis, A. Razmadze Mathematical Institute, I. Javakhishvili Tbilisi State University, 6 Tamarashvili Str., 0177 Tbilisi,
Georgia; (b) Department of Mathematics, Faculty of Informatics and Control Systems, Georgian Technical University, 77, Kostava St., Tbilisi, Georgia \quad email:meskhi@rmi.ge, a.meskhi@gtu.ge
}
and
Yoshihiro Sawano
\footnote{
Yoshihiro Sawano:
Department of Mathematics and Information Sciences, Tokyo Metropolitan University, Minami-Ohsawa 1-1, Hachioji, Tokyo, 192-0397, Japan \quad
email:yoshihiro-sawano@celery.ocn.ne.jp
}
}


\maketitle

\begin{abstract}
In this note some structural properties of grand variable exponent Lebesgue/ Morrey spaces
over spaces of homogeneous type
are obtained. In particular, it is proved that the closure of $L^{\infty}$ and the closure of $L^{p(\cdot), \lambda(\cdot)}$
in grand variable exponent Morrey space $L^{p(\cdot), \lambda(\cdot), \theta}$ coincide
if the measure of the underlying space is finite.
Moreover, we get two different characterizations of this class. Further,
duality and preduality of grand variable exponent Lebesghue space defined on quasi-metric measure spaces
with $\sigma$-finite measure are obtained,
which is new even when $\mu$ is finite.
\end{abstract}
{\bf keywords}
Morrey spaces,
Nakano spaces,
Iwaniec-Sbordone space,
Grand variable exponent Morrey spaces,
density,
class of bounded functions,
predual space
and
dual space
\\
{\bf 2010 classification}: 46B10, 46B26

\section{Introduction}
In recent years
we realized that the classical function spaces are no longer appropriate spaces
when we attempt to consider a number of contemporary problems arising naturally in
many other branches in science such as non-linear elasticity theory, fluid mechanics,
image restoration, mathematical
modelling of various physical phenomena, solvability problems of non-linear partial differential equations
and so on.
It was evident that many such problems are naturally related to the problems with non-standard local growth
(see e.g., \cite{AMS08,RaRe,Ruzicka00,SamkoProg,Wunderli10,Zh} and references therein).
It thus became necessary to introduce and study some new function spaces from various viewpoints.
As examples of these spaces,
we can list variable exponent spaces and grand function spaces.
These spaces were intensively investigated by researchers in the last two decades.
In this note we deal with grand variable exponent Morrey spaces
(GVEMS for short)
$L^{p(\cdot), \lambda(\cdot), \theta}$, where $p(\cdot)$ and $\lambda(\cdot)$ are variable parameters.


The study of function spaces with variable exponent has been a very active area of research nowadays.
The variable exponent Lebesgue space $L^{p(\cdot)}$ is a special case of this class of functions, which was introduced by W. Orlicz in the 1930s and subsequently generalized by I. Musielak and W. Orlicz. Later H. Nakano specified it in \cite{Nakano50,Na}. For the mapping properties of operators in Harmonic Analysis in variable exponent function spaces we refer to the monographs \cite{CF,DHHR,KMRS1,KMRS2} and the survey \cite{INS} as well as references cited therein.

T. Iwaniec and C. Sbordone introduced grand Lebesgue spaces in the 1990s \cite{IwSb} when they studied integrability of Jacobian under the minimal condition. In subsequent years, quite a number of problems in Harmonic Analysis and the theory of non-linear differential equations were studied in these spaces (see, e.g., the papers \cite{Fi,FiGuJa,FiRa,KMRS1,KMRS2}).

Morrey spaces $L^{p,\lambda}$ were introduced based on the work of C. Morrey in 1938 \cite{405a},
where Morrey studied regularity problems of solutions to partial differential equations, and provided a useful tool in the regularity theory of PDE's (for Morrey spaces we refer to the books \cite{187a,kuf};
see also \cite{rafsamsam} where an overview of various generalizations may be found).

Grand Morrey spaces $L^{p), \lambda, \theta}$ were introduced by the first author in \cite{meskhi},
where
he studied the boundedness of integral operators in these spaces.

In \cite{Rafeiro} the author considered the space $L^{p), \lambda, \theta}$, where the authors \lq \lq grandified'' the parameter $\lambda$
as well. We mention also the paper \cite{OOS} for the relevant results.

In this paper we work on a quasi-metric measure space $(X, d, \mu)$.
In the first half of this paper
we assume that $\mu$ is a finite measure.
Let $L^{p(\cdot), \lambda(\cdot), \theta}$ denote grand variable exponent Morrey spaces
(shortly GVEMS) over $X$.
One of our aims is to characterize classes of functions
$\Big[ L^{\infty}\Big]_{L^{p(\cdot), \lambda(\cdot), \theta}}$
and
$\Big[ L^{p(\cdot), \lambda(\cdot)}\Big]_{L^{p(\cdot), \lambda(\cdot), \theta}}$
which are respectively defined to be the closure in $L^{p(\cdot), \lambda(\cdot), \theta}$
of classes $L^{\infty}$ and $L^{p(\cdot), \lambda(\cdot)}$.
Here $L^{p(\cdot), \lambda(\cdot)}=L^{p(\cdot), \lambda(\cdot),0}$
is the variable Morrey space which having the second variable parameter $\lambda$.
In particular we show that these classes are the same and give two different characterizations of this class.
As a special cases
we characterize $\Big[ L^{\infty}\Big]_{L^{p(\cdot), \theta}}$ and $\Big[ L^{p(\cdot)}\Big]_{L^{p(\cdot), \theta}}$
for the grand variable exponent Lebesgue space (shortly GVELS)
$L^{p(\cdot), \theta}$.
The space
$\Big[ L^{p(\cdot)}\Big]_{L^{p(\cdot), \theta}}$
was introduced in \cite{KoMeGMJ}, where the authors
studied the mapping properties of operators
of harmonic analysis in these spaces.
Further, in this paper GVELS is introduced
over a measure space with $\sigma$-finite measure
and its preduality/duality properties are investigated.

\section{Preliminaries}

Let $(X, d, \mu)$ be a quasi-metric measure space.
Recall that a quasi-metric $d$ is a function $d:X\times X\rightarrow [0,\infty)$ which satisfies the following conditions:\\

(a) $d(x,y)=0$ if and only if $x=y$.\\

(b) For all $x,y\in X$, $d(x,y)= d(y,x)$.\\

(c) There is a constant ${\mathcal{K}}>0$ such that $d(x,y)\leq {\mathcal{K}} (d(x,z)+d(z,y))$ for all $x,y,z\in X$.\\

Let
$$d_{X}:={\rm diam}(X):=\sup\{d(x,y): x,y\in X\}$$
be the diameter of $X$. We denote by $B(x,r)$ the ball with center $x$ and radius $r$, i.e.,
$B(x,r): = \{y \in X : d(x, y) < r\}$.

We assume that $\mu\big( \{x\}\big) =0$
for all $x\in X$, and that $\mu(X)< \infty$
for the time being.
A measure $\mu$ is said to satisfy the doubling condition ($\mu\in {\rm D C}(X)$)
if there is a constant $D_{\mu}>0$ such that
$$\mu(B(x,2r))\le D_{\mu}\mu(B(x,r))$$
for every $x\in X$ and $r>0$.\\

The triple $(X, d, \mu)$ is called a quasi-metric measure space. If $\mu$ is doubling, then $(X, d, \mu)$ is called a space of homogeneous type (shortly SHT).

We denote by $P(X)$ the family of all real-valued $\mu$-measurable functions $p(\cdot)$ on $X$ such that
$$ 1<p_-:= p_-(X):= \inf \{ p(x): x\in X \}$$
and that
$$
p_+:= p_+(X):= \sup \{p(x): x\in X\} < \infty.
$$

If $E$ is a measurable set of $X$, then we use the notation:
$$p_-(E):= \inf \{ p(x): x\in E \};\;\; p_+(E):= \sup \{p(x): x\in E\}. $$

Let $p(\cdot)\in P(X)$.
The Lebesgue space with variable exponent $p(\cdot)$,
denoted by $L^{p(\cdot)}(X)$ (or by $L^{p(x)}(X)$),
is the class of all measurable $\mu$-functions $f$ on $X$ for
which
\[
 S_{p}(f):= \int_{X} |f(x)|^{p(x)} d\mu(x) <\infty.
\]

The norm in $L^{p(\cdot)}(X)$ is defined as follows:
Let $f$ be a measurable function.
Then define
\[
\| f\|_{L^{p(\cdot)}}=\| f\|_{L^{p(\cdot)}(X)}= \inf \bigg\{ \lambda>0: S_p\left(\frac{f}{\lambda}\right) \leq 1 \bigg\}.
\]

It is known that $L^{p(\cdot)}$ is a Banach space (see, e.g., \cite{KoRa,Sawano-Survey}).
Further, H\"older's inequality holds in the following form:

$$ \Bigg| \int\limits_X f(x)g(x) d\mu(x)\Bigg| \leq
 \Big( 1+ \frac{1}{p_-} - \frac{1}{p_+}\Big) \|f\|_{L^{p(\cdot)}} \| g\|_{L^{p'(\cdot)}}, \;\;\; p'(\cdot)= \frac{p(\cdot)}{p(\cdot)-1}. $$

The following relation of $L^{p(\cdot)}$ spaces will be useful for us:
for all $p(\cdot),q(\cdot)\in P(X)$,

\begin{equation}\label{embedding}
\| f\|_{L^{p(\cdot)}} \leq \big(1+\mu(X)\big) \| f\|_{L^{q(\cdot)}}, \;\; f\in L^{q(\cdot)}, \;\; p(x) \leq q(x).
\end{equation}

The next property of variable exponent Lebesgue space is important;
see \cite{NoSa12}, for example.
\begin{proposition}\label{esti} Let $1< p_- \leq p_+< \infty$.
Then the following relations between the norm and modular of $L^{p(\cdot)}$ space hold:

$$ \| f\|_{L^{p(\cdot)}}^{p_+} \leq S_{p}(f) \leq \| f\|_{L^{p(\cdot)}}^{p_-}, \;\;\; \| f\|_{L^{p(\cdot)}} \leq 1;$$

$$ \| f\|_{L^{p(\cdot)}}^{p_-} \leq S_{p}(f) \leq \| f\|_{L^{p(\cdot)}}^{p_+}, \;\;\; \| f\|_{L^{p(\cdot)}} \geq 1.$$
\end{proposition}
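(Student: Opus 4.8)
The plan is to derive both chains of inequalities from two elementary facts: a scaling estimate for the modular $S_p$, and the identity $S_p\big(f/\|f\|_{L^{p(\cdot)}}\big)=1$ valid on the unit sphere. First I would record the scaling estimate. Fix $f$ and a scalar $\lambda>0$. Since $S_p(\lambda f)=\int_X\lambda^{p(x)}|f(x)|^{p(x)}\,d\mu(x)$, the only point is to compare $\lambda^{p(x)}$ with $\lambda^{p_-}$ and $\lambda^{p_+}$. For $0<\lambda\le1$ the map $s\mapsto\lambda^s$ is non-increasing, so $\lambda^{p_+}\le\lambda^{p(x)}\le\lambda^{p_-}$ for $\mu$-a.e.\ $x$, and integrating gives
\[
\lambda^{p_+}S_p(f)\le S_p(\lambda f)\le\lambda^{p_-}S_p(f),\qquad 0<\lambda\le1.
\]
For $\lambda\ge1$ the monotonicity reverses, yielding
\[
\lambda^{p_-}S_p(f)\le S_p(\lambda f)\le\lambda^{p_+}S_p(f),\qquad \lambda\ge1.
\]

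Next I would establish the unit-sphere identity. Assume $0<\|f\|_{L^{p(\cdot)}}<\infty$ and set $\Phi(\lambda):=S_p(f/\lambda)$ for $\lambda>0$. The function $\Phi$ is non-increasing, and it is continuous on $(0,\infty)$: this is where the hypothesis $p_+<\infty$ is used, via dominated convergence applied to $\lambda^{-p(x)}|f(x)|^{p(x)}$. Moreover $\Phi(\lambda)\to0$ as $\lambda\to\infty$ and $\Phi(\lambda)\to\infty$ as $\lambda\to0^+$. By monotonicity the defining set $\{\lambda>0:\Phi(\lambda)\le1\}$ is a half-line whose left endpoint is exactly $\|f\|_{L^{p(\cdot)}}$; letting $\lambda$ decrease to this endpoint from the right gives $\Phi\big(\|f\|_{L^{p(\cdot)}}\big)\le1$, and letting $\lambda$ increase to it from the left gives $\Phi\big(\|f\|_{L^{p(\cdot)}}\big)\ge1$. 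Hence $S_p\big(f/\|f\|_{L^{p(\cdot)}}\big)=1$.

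Finally I would combine the two. Write $g:=f/\|f\|_{L^{p(\cdot)}}$, so that $S_p(g)=1$, and apply the scaling estimate with $\lambda=\|f\|_{L^{p(\cdot)}}$. If $\|f\|_{L^{p(\cdot)}}\le1$ the first chain gives $\|f\|_{L^{p(\cdot)}}^{p_+}\le S_p(f)\le\|f\|_{L^{p(\cdot)}}^{p_-}$, while if $\|f\|_{L^{p(\cdot)}}\ge1$ the second chain gives $\|f\|_{L^{p(\cdot)}}^{p_-}\le S_p(f)\le\|f\|_{L^{p(\cdot)}}^{p_+}$, which are exactly the two asserted relations. The case $f=0$ is trivial. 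The only genuinely delicate step is the unit-sphere identity $S_p\big(f/\|f\|_{L^{p(\cdot)}}\big)=1$: its proof relies essentially on $p_+<\infty$ to guarantee continuity of $\Phi$ and the finiteness of the modular along the way, and without this hypothesis one can only assert $S_p\big(f/\|f\|_{L^{p(\cdot)}}\big)\le1$, which would weaken the conclusion.
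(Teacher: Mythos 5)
Your proof is correct. The paper does not actually prove this proposition --- it only cites the reference [NoSa12] --- so there is nothing internal to compare against; your argument (the scaling estimate $\lambda^{p_+}S_p(f)\le S_p(\lambda f)\le \lambda^{p_-}S_p(f)$ for $0<\lambda\le 1$ and its reversal for $\lambda\ge 1$, combined with the unit-sphere identity $S_p\bigl(f/\|f\|_{L^{p(\cdot)}}\bigr)=1$, which indeed uses $p_+<\infty$) is the standard one found in the literature, and the only case you leave implicit, $\|f\|_{L^{p(\cdot)}}=\infty$, is trivial since then $S_p(f)=\infty$ as well.
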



Let ${{\mathcal{P}}}_{\mu}^{\log}(X)$ be the class of those exponents $p(\cdot)$ that satisfy the following log-H\"older condition: there exists a positive constant $a$ such that for all $x,y\in X$
with $\mu(B_{x,y}) \leq 2^{-1}$,
\begin{equation}\label{WL-+}
|p(x)- p(y)| \leq \frac{a}{- \ln (\mu(B_{x,y}))},
\end{equation}
where $B_{x,y}:= B(x, d(x,y))$.

In the case when we deal with $\Omega\subset {\Bbb{R}}^n$
and $\mu$ is the Lebesgue measure over $\Omega$,
condition (\ref{WL-+}) reads as follows: there is a positive constant $d$ such that for all
$x,y\in \Omega$ with $|x-y| \leq 2^{-1}$, then
\begin{equation}\label{WL-+Om}
|p(x)- p(y)| \leq \frac{d}{- \ln |x-y|}.
\end{equation}

Let $(X, d, \mu)$ be an SHT. If $r(\cdot) \in {{\mathcal{P}}}_{\mu}^{\log}(X)$, then the following Diening \cite{Di} type estimate holds for the exponent $r(\cdot)$ (see \cite{KMRS1}, Chapter 4):
there is a positive constant $A$ such that for all balls $B\subset X$ and all $x,y\in B$, the following estimates hold:
\begin{equation}\label{Dien}
\mu(B)^{r_-(B)- r_+(B)} \leq A.
\end{equation}

\begin{equation}\label{Dien1}
\mu(B)^{|r(x)- r(y)|} \leq A.
\end{equation}

It is known that the log-H\"older continuity condition guarantees the boundedness of operators
in Harmonic Analysis over $L^{p(\cdot)}$
(see e.g. the monographs mentioned above and references cited therein).

Let $ \lambda(\cdot)$ be a $\mu$-measurable function on $X$ with values in $[0,1]$. Suppose that $p(\cdot) \in P(X)$. We define
 variable Morrey space $L^{p(\cdot),\lambda(\cdot)}$ to be a collection of those measurable $f:X\mapsto {\Bbb{R}}$ for which the norm
$$\| f\|_{ L^{p(\cdot),\lambda(\cdot)} }=
\sup_{x\in X, \; r\in (0,\ell)} \mu(B(x,r))^{\frac{-\lambda(x)}{p(x)}} \|f \chi_{B(x,r)} \|_{L^{p(\cdot)}}
$$
is finite.

Variable exponent Morrey spaces were
initially introduced in \cite{AHS} over Euclidean spaces
and later in \cite{KoMe}
over quasi-metric measure spaces.
We refer the monograph \cite{KMRS2} (Chapter 13) and the survey \cite{Sawano-Survey}
for the mapping and other properties of variable exponent Morrey and Morrey-type spaces.

It is obvious that if $\lambda\equiv 0$, then $L^{p(\cdot),\lambda(\cdot)}$
is a variable exponent Lebesgue spaces $L^{p(\cdot)}.$

In this note we are interested in the grand variable exponent Morrey space $L^{p(\cdot),\lambda(\cdot), \theta}$,
where $\lambda(\cdot)$ is a $\mu$-measurable function on $X$
with values in $[0,1]$, $p(\cdot) \in P(X)$ and $\theta>0$ is a fixed parameter.
This space is defined by the norm:
\begin{eqnarray*}
\lefteqn{
\| f\|_{L^{p(\cdot), \lambda(\cdot), \theta}}
}\\
&:=& \sup_{0<c <p_- -1} c^{ \frac{\theta}{p_- -c}}
\left(
\sup_{x\in X, \; r\in (0,\ell)} \mu(B(x,r))^{\frac{-\lambda(x)}{p(x)-c}} \| f \chi_{B(x,r)} \|_{L^{p(\cdot)-c}}
\right) \\
&=& \sup_{0<c <p_- -1} c^{ \frac{\theta}{p_- -c}} \|f\|_{L^{p(\cdot)-c, \lambda(\cdot)}},
\end{eqnarray*}
where $\|f\|_{L^{p(\cdot)-c, \lambda(\cdot)}}$ is the variable Morrey space norm of $f$.

If $p(\cdot)$ and $\lambda(\cdot)$ belong to the class ${{\mathcal{P}}}_{\mu}^{\log}(X)$,
then due to estimates (\ref{Dien}) and (\ref{Dien1})
for $p(\cdot)$ and $\lambda(\cdot)$,
the norm $\| f\|_{L^{p(\cdot), \lambda(\cdot), \theta}(X)}$
is equivalent to
\begin{align*}
\lefteqn{
\| f\|_{ L^{p(\cdot), \lambda(\cdot), \theta}}
}\\
&\approx
\sup_{0<c <p_- -1} c^{ \frac{\theta}{p_- -c}}
\left(
\sup_{x\in X, \; r\in (0,\ell)} \mu(B(x,r))^{-\frac{\lambda_-(B)}{p_- (B)-c}} \| f\chi_{B(x,r)}\|_{L^{p(\cdot)-c}}
\right).
\end{align*}

If $\lambda(\cdot)\equiv 0$, then the space
$L^{p(\cdot), \lambda(\cdot), \theta}\equiv L^{p(\cdot),0, \theta}$
coincides with the grand Lebesgue space
$L^{p(\cdot), \theta}$.

Let $\Omega$ be a bounded domain in ${\Bbb{R}}^n$.
Our spaces cover many classical spaces.
If $\lambda(\cdot)\equiv 0$, $p=p_c=$ const, then $L^{p(\cdot), \lambda(\cdot), \theta}(\Omega)$
is the grand Lebesgue space $L^{p_c),\theta}(\Omega)$ introduced in \cite{GrIwSb}.
We remark that
$L^{p(\cdot), \lambda(\cdot), 1}(\Omega)$ is the Iwaniec--Sbordone \cite{IwSb} space $L^{p_c)}(\Omega)$. The space $L^{p_c)}(\Omega)$ naturally arises, for example, when studying integrability problems of the Jacobian under minimal hypotheses (see \cite{IwSb}), while $L^{p_c),\theta}(\Omega)$ is related to the investigation of the nonhomogeneous $n$-harmonic equation:
$\text{div}\;A(x, \nabla u) = \mu$ (see \cite{GrIwSb}). It is known (see e.g., \cite{Fi}) that the space $L^{p_c),\theta}(\Omega)$ is non-reflexive and non-separable.

By using embedding \eqref{embedding} we can check that the following properties hold for $p(\cdot) \in P(X)$:
$$
L^{p(\cdot),\lambda(\cdot)} \hookrightarrow
L^{p(\cdot),\lambda(\cdot), \theta} \hookrightarrow
L^{p(\cdot)-c,\lambda(\cdot)}, \;\; 0< c<p_- -1, \theta>0.
$$




Having clarified the definition of the main function spaces,
we move on to the closed subspaces.
Let us denote by
$\Big[L^{p(\cdot), \lambda(\cdot)}\Big]_{ L^{p(\cdot), \lambda(\cdot), \theta}}$
and
$\Big[L^{\infty}\Big]_{ L^{p(\cdot), \lambda(\cdot), \theta} }$
the closure of $L^{p(\cdot), \lambda(\cdot)}$ and $L^{\infty}$ in $L^{p(\cdot), \lambda(\cdot), \theta}$,
respectively.

\section{Density of $L^{\infty}$ and $L^{p(\cdot), \lambda(\cdot)}$ in $L^{p(\cdot), \lambda(\cdot), \theta}$ over finite measure spaces}

The following statement can be proved in the same way
as the corresponding statement for
function spaces with constant exponent on domains in ${\Bbb{R}}^n$
(see \cite[pp. 873-874]{KMRS2}) but we give the proof for the sake of completeness.

\begin{proposition}\label{varepsilon0}
Let $(X, d, \mu)$ be a finite quasi-metric measure space and let $p(\cdot) \in P(X)$.
Suppose that $\theta>0$
and $\lambda(\cdot)\in [0,1]$.
Then
$$ \lim_{c \to 0} c^{ \frac{\theta}{p_- -c}} \|f\|_{L^{p(\cdot)-c, \lambda(\cdot)}}=0 $$
 for $f\in \Big[L^{p(\cdot), \lambda(\cdot)}\Big]_{L^{p(\cdot), \lambda(\cdot), \theta}}$.
\end{proposition}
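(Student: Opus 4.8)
The plan is to prove the statement first for $f$ lying in the dense subspace $L^{p(\cdot),\lambda(\cdot)}$ itself, and then to propagate it to the whole closure $\Big[L^{p(\cdot),\lambda(\cdot)}\Big]_{L^{p(\cdot),\lambda(\cdot),\theta}}$ by an approximation argument. The second step is essentially free: for \emph{any} $h$ and any admissible $c$ one has, directly from the definition of the grand norm as a supremum over $c$, the inequality $c^{\theta/(p_--c)}\|h\|_{L^{p(\cdot)-c,\lambda(\cdot)}}\le\|h\|_{L^{p(\cdot),\lambda(\cdot),\theta}}$. Hence, given $f$ in the closure and $\varepsilon>0$, I would choose $g\in L^{p(\cdot),\lambda(\cdot)}$ with $\|f-g\|_{L^{p(\cdot),\lambda(\cdot),\theta}}<\varepsilon$ and split $c^{\theta/(p_--c)}\|f\|_{L^{p(\cdot)-c,\lambda(\cdot)}}$ by the triangle inequality into a term bounded by $\|f-g\|_{L^{p(\cdot),\lambda(\cdot),\theta}}<\varepsilon$ and a term $c^{\theta/(p_--c)}\|g\|_{L^{p(\cdot)-c,\lambda(\cdot)}}$ that tends to $0$ by the first step; letting $c\to0$ and then $\varepsilon\to0$ finishes the argument.

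It therefore remains to treat $g\in L^{p(\cdot),\lambda(\cdot)}$. Since $\theta>0$ and $p_->1$, the scalar factor satisfies $c^{\theta/(p_--c)}\to0$ as $c\to0^{+}$, so it suffices to show that $\|g\|_{L^{p(\cdot)-c,\lambda(\cdot)}}$ stays bounded as $c\to0^{+}$; concretely I would establish the uniform estimate $\sup_{0<c<c_0}\|g\|_{L^{p(\cdot)-c,\lambda(\cdot)}}\le C\|g\|_{L^{p(\cdot),\lambda(\cdot)}}$ for some $c_0>0$ and $C$ independent of $c$. The heart of the proof is this uniform bound, which I would obtain ball-by-ball. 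Fix $B=B(x,r)$ and write $\frac1{p(\cdot)-c}=\frac1{p(\cdot)}+\frac1{r(\cdot)}$ with $r(\cdot)=\frac{p(\cdot)(p(\cdot)-c)}{c}$. The generalized H\"older inequality in variable Lebesgue spaces gives $\|g\chi_{B}\|_{L^{p(\cdot)-c}}\le K_0\|g\chi_{B}\|_{L^{p(\cdot)}}\|\chi_{B}\|_{L^{r(\cdot)}}$, while the defining inequality of the Morrey norm yields $\|g\chi_{B}\|_{L^{p(\cdot)}}\le\mu(B)^{\lambda(x)/p(x)}\|g\|_{L^{p(\cdot),\lambda(\cdot)}}$.

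Multiplying by the rescaling factor $\mu(B)^{-\lambda(x)/(p(x)-c)}$ and using the elementary estimate $\|\chi_{B}\|_{L^{r(\cdot)}}\le\mu(B)^{c/(p_+(B)(p_+(B)-c))}$ for balls with $\mu(B)\le1$, the quantity $\mu(B)^{-\lambda(x)/(p(x)-c)}\|g\chi_{B}\|_{L^{p(\cdot)-c}}$ is controlled by $K_0\|g\|_{L^{p(\cdot),\lambda(\cdot)}}$ times a power $\mu(B)^{E}$ with $E=c\big(\frac1{p_+(B)(p_+(B)-c)}-\frac{\lambda(x)}{p(x)(p(x)-c)}\big)$; for balls with $\mu(B)>1$ the measure factor is trivially controlled by a power of $\mu(X)$ tending to $1$. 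The only delicate point, and the main obstacle, is the small-ball regime $\mu(B)\to0$, where $\mu(B)^{-\lambda(x)/(p(x)-c)}$ blows up and must be compensated by the gain $\|\chi_{B}\|_{L^{r(\cdot)}}$ obtained from lowering the exponent to $p(\cdot)-c$. The hypothesis $\lambda(\cdot)\le1$ is exactly what makes this compensation succeed: it forces $\frac{\lambda(x)}{p(x)(p(x)-c)}\le\frac1{p(x)(p(x)-c)}$, so that for constant or slowly varying $p(\cdot)$ one gets $E\ge0$ and hence $\mu(B)^{E}\le1$. When $p(\cdot)$ genuinely oscillates, the residual mismatch between the centre value $p(x)$ and the ball extremes $p_\pm(B)$ is absorbed using the Diening-type estimate \eqref{Dien}, which bounds $\mu(B)^{E}$ by a factor of the form $A^{O(c)}\to1$. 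Taking the supremum over all balls then yields the uniform bound $\sup_{0<c<c_0}\|g\|_{L^{p(\cdot)-c,\lambda(\cdot)}}\le C\|g\|_{L^{p(\cdot),\lambda(\cdot)}}$, completing the first step and therefore the proof.
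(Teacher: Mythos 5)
Your proposal follows the same skeleton as the paper's proof: approximate $f$ by some $g\in L^{p(\cdot),\lambda(\cdot)}$ in the grand norm, absorb $c^{\theta/(p_--c)}\|f-g\|_{L^{p(\cdot)-c,\lambda(\cdot)}}$ into $\|f-g\|_{L^{p(\cdot),\lambda(\cdot),\theta}}<\varepsilon$, and kill the remaining term by combining $c^{\theta/(p_--c)}\to 0$ with a bound on $\|g\|_{L^{p(\cdot)-c,\lambda(\cdot)}}$ that is uniform in small $c$. Where you differ is in how that uniform bound is obtained. The paper simply invokes the embedding \eqref{embedding}, which compares $\|g\chi_B\|_{L^{p(\cdot)-c}}$ with $\|g\chi_B\|_{L^{p(\cdot)}}$ but says nothing about the mismatch between the weights $\mu(B)^{-\lambda(x)/(p(x)-c)}$ and $\mu(B)^{-\lambda(x)/p(x)}$; the ratio of these two weights is $\mu(B)^{-\lambda(x)c/(p(x)(p(x)-c))}$, which for fixed $c$ is unbounded over small balls whenever $\lambda(x)>0$. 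Your ball-by-ball argument via the generalized H\"older inequality and the gain $\|\chi_B\|_{L^{r(\cdot)}}\le\mu(B)^{c/(p_+(B)(p_+(B)-c))}$ supplies exactly the compensation that is needed, and it makes visible where the hypothesis $\lambda(\cdot)\le 1$ enters; in this respect your write-up is more complete than the paper's. One caveat: your final step controls the residual exponent $E$ by the Diening estimate \eqref{Dien}, i.e.\ you need $p(\cdot)\in{\mathcal P}^{\log}_{\mu}(X)$, which is not among the hypotheses of Proposition \ref{varepsilon0}; it is only assumed later, in Theorem \ref{main}, where the proposition is applied and where the paper makes the analogous remark that \eqref{Dien} holds with a constant independent of $c$. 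So you should either state the log-H\"older assumption explicitly in your version of the proposition, or acknowledge that for exponents that oscillate too wildly the uniform embedding $L^{p(\cdot),\lambda(\cdot)}\hookrightarrow L^{p(\cdot)-c,\lambda(\cdot)}$ is not justified by your argument (nor, in fact, by the paper's one-line appeal to \eqref{embedding}).
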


\begin{proof}
Let $f\in \Big[ L^{p(\cdot), \lambda(\cdot)} \Big]_{ L^{p(\cdot), \lambda(\cdot), \theta}}$
and let $\delta>0$. Then there exists $f_{n_0}\in L^{p(\cdot), \lambda(\cdot)}$ such that
$$
\| f- f_{n_0}\|_{L^{p(\cdot), \lambda(\cdot), \theta}} < \frac{\delta}{2}.
$$
Let $0<c \ll 1$.
Denote by $C_{p, \lambda, X}$
the constant independent of $c$.
For $f_{n_0}$ and $\delta$, embedding \eqref{embedding} yields that
\begin{eqnarray*}
&& c^{\frac{\theta}{p_- - c}} \sup_{x\in X, \; 0<r<d_X}
\mu(B(x,r))^{-\frac{\lambda(x)}{p(x)-c}} \|f_{n_0}\chi_{B(x,r)}\|_{L^{p(\cdot)-c}}\\
&\leq& C_{p,\lambda, X} \; c^{ \frac{\theta}{p_- -c} }
\sup_{x\in X, 0<r<d_X} \mu(B(x,r))^{-\frac{\lambda(x)}{p(x)}} \|f_{n_0}\chi_{B(x,r)}\|_{L^{p(\cdot)}}\\
&=& C_{p, \lambda, X} \; \;
c^{\frac{\theta}{p_- -c}} \| f_{n_0}\|_{L^{p(\cdot), \lambda(\cdot)} }
<
\frac{\delta}{2}.
\end{eqnarray*}
Taking now such an $f_{n_0}$, we find that
\begin{eqnarray*}
&& c^{\frac{\theta}{p_- - c}} \sup_{x\in X, \; 0<r<d_X}
\mu(B(x,r))^{-\frac{\lambda(x)}{p(x)-c}}
\|f\chi_{B(x,r)}\|_{L^{p(\cdot)-c, \theta}}\\
&\leq & c^{\frac{\theta}{p_- - c}} \sup_{x\in X, \; 0<r<d_X}
\mu(B(x,r))^{-\frac{\lambda(x)}{p(x)-c}}
\|(f-f_{n_0}) \chi_{B(x,r)}\|_{L^{p(\cdot)-c, \theta}}\\
&\quad+& c^{\frac{\theta}{p_- - c}} \sup_{x\in X, \; 0<r<d_X}
\mu(B(x,r))^{-\frac{\lambda(x)}{p(x)-c}} \|f_{n_0}\chi_{B(x,r)}\|_{L^{p(\cdot)-c, \theta}}\\
&\leq& \|f-f_{n_0}\|_{ L^{p(\cdot), \lambda(\cdot), \theta} }
+ C_{p, X, \lambda} \; c^{\frac{\theta}{p_- -c}} \| f_{n_0}\|_{L^{p(\cdot), \lambda(\cdot)}} \\
&\leq&
\frac{\delta}{2} + C_{p, \lambda, X} \; c^{\frac{\theta}{p_- -c}} \| f_{n_0}\|_{L^{p(\cdot), \lambda(\cdot)}}< \delta.
\end{eqnarray*}
Thus, we have the desired result.
\end{proof}



The following statement for grand Lebesgue spaces with constant exponent was proved in \cite{Sawano}.

\begin{proposition}\label{Sawano-Result}
Let $(X, d, \mu)$ be a finite quasi-metric measure space, $p(\cdot) \in P(X)$,
$\lambda(\cdot) \in [0,1]$ and $\theta>0$. Then we have
\[ \Big[L^\infty\Big]_{L^{p(\cdot),\lambda(\cdot), \theta}} =
\left\{ f\in L^{p(\cdot),\lambda(\cdot), \theta}: \lim\limits_{N\to \infty}
\left\| \chi_{\{|f|>N\}} f \right \|_{L^{p(\cdot),\lambda(\cdot), \theta}}=0 \right\}. \]
\end{proposition}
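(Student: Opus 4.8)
Write $S$ for the set on the right-hand side of the asserted identity. The plan is to prove the two inclusions $S\subseteq\big[L^\infty\big]_{L^{p(\cdot),\lambda(\cdot),\theta}}$ and $\big[L^\infty\big]_{L^{p(\cdot),\lambda(\cdot),\theta}}\subseteq S$ separately. Throughout I would exploit two structural features of the grand norm: it satisfies the triangle inequality, and it is monotone with respect to the pointwise order, i.e. $|u|\le|v|$ implies $\|u\|_{L^{p(\cdot),\lambda(\cdot),\theta}}\le\|v\|_{L^{p(\cdot),\lambda(\cdot),\theta}}$. Both are inherited from the corresponding properties of each quasi-norm $\|\cdot\|_{L^{p(\cdot)-c}}$, because the Morrey and grand norms are suprema of positive multiples of such quantities. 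I would also record that, since $\mu(X)<\infty$, the embedding \eqref{embedding} gives $L^\infty\subseteq L^{p(\cdot),\lambda(\cdot),\theta}$, so that bounded functions are genuine approximants in the space.

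For the inclusion $S\subseteq\big[L^\infty\big]_{L^{p(\cdot),\lambda(\cdot),\theta}}$ I would argue by truncation. Given $f\in S$, set $f_N:=\chi_{\{|f|\le N\}}f$. Then $f_N\in L^\infty$ because $|f_N|\le N$, while $f-f_N=\chi_{\{|f|>N\}}f$, so that $\|f-f_N\|_{L^{p(\cdot),\lambda(\cdot),\theta}}=\|\chi_{\{|f|>N\}}f\|_{L^{p(\cdot),\lambda(\cdot),\theta}}\to 0$ as $N\to\infty$ by the defining property of $S$. Hence $f$ lies in the closure of $L^\infty$. This direction is routine.

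For the reverse inclusion I would show that $S$ is a closed linear subspace of $L^{p(\cdot),\lambda(\cdot),\theta}$ containing $L^\infty$; since $\big[L^\infty\big]_{L^{p(\cdot),\lambda(\cdot),\theta}}$ is the smallest such subspace, this gives $\big[L^\infty\big]_{L^{p(\cdot),\lambda(\cdot),\theta}}\subseteq S$. That $L^\infty\subseteq S$ is immediate, since for $g\in L^\infty$ and $N>\|g\|_{L^\infty}$ one has $\chi_{\{|g|>N\}}g=0$. Both linearity and closedness rest on one elementary set-theoretic truncation estimate: on $\{|u+v|>2N\}$ at least one of $|u|>N$, $|v|>N$ holds, and on $\{|u+v|>2N,\,|u|\le N\}$ one has $|u|\le N<|v|$; these observations yield the pointwise bound $\chi_{\{|u+v|>2N\}}|u+v|\le 2\,\chi_{\{|u|>N\}}|u|+2\,\chi_{\{|v|>N\}}|v|$. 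Applying the monotone triangle inequality to this bound shows that the functional $f\mapsto\|\chi_{\{|f|>N\}}f\|_{L^{p(\cdot),\lambda(\cdot),\theta}}$ is subadditive up to the dilation $N\mapsto 2N$, from which closure of $S$ under addition follows; closure under scalar multiplication uses $\{|\alpha f|>N\}=\{|f|>N/|\alpha|\}$.

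For closedness, given $f_k\in S$ with $f_k\to f$ in $L^{p(\cdot),\lambda(\cdot),\theta}$, the same truncation inequality applied to the decomposition $f=(f-f_k)+f_k$ produces $\|\chi_{\{|f|>2N\}}f\|_{L^{p(\cdot),\lambda(\cdot),\theta}}\le 2\|f-f_k\|_{L^{p(\cdot),\lambda(\cdot),\theta}}+\|\chi_{\{|f_k|>N\}}f_k\|_{L^{p(\cdot),\lambda(\cdot),\theta}}$. Letting $N\to\infty$ for fixed $k$ annihilates the second term because $f_k\in S$, and letting $k\to\infty$ sends the first term to $0$, so $\lim_{N\to\infty}\|\chi_{\{|f|>2N\}}f\|_{L^{p(\cdot),\lambda(\cdot),\theta}}=0$, i.e. $f\in S$. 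Combining the two inclusions proves the identity. I expect the only real work to be the careful bookkeeping of the set inclusions needed to justify the displayed pointwise estimate and to ensure that every passage uses only the monotonicity and triangle inequality the grand Morrey norm genuinely enjoys; no finer analysis is required, precisely because the finiteness of $\mu$ removes all integrability obstructions.
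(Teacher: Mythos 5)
Your proposal is correct and, at its core, takes the same route as the paper: the inclusion of the right-hand set into $\Big[L^\infty\Big]_{L^{p(\cdot),\lambda(\cdot),\theta}}$ is the identical truncation $f_N=\chi_{\{|f|\le N\}}f$, and your dilated-truncation inequality $\chi_{\{|u+v|>2N\}}|u+v|\le 2\chi_{\{|u|>N\}}|u|+2\chi_{\{|v|>N\}}|v|$, applied to $f=(f-g)+g$ with $g\in L^\infty$ and $N>\|g\|_{L^\infty}$, yields exactly the paper's bound $\big\|\chi_{\{|f|>2N\}}f\big\|_{L^{p(\cdot),\lambda(\cdot),\theta}}\le 2\|f-g\|_{L^{p(\cdot),\lambda(\cdot),\theta}}$, which the paper gets from its three-term pointwise decomposition with the sets $\{|g|\le N/2\}$ and $\{|g|>N/2\}$. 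Recasting this as \emph{the right-hand side is a closed linear subspace containing $L^\infty$} is a harmless (slightly more general) repackaging; the only slips are cosmetic --- your closedness display should carry the factor $2$ on $\big\|\chi_{\{|f_k|>N\}}f_k\big\|_{L^{p(\cdot),\lambda(\cdot),\theta}}$ as well, and the passage from the limit along $2N$ back to the limit along $N$ uses the monotonicity of $N\mapsto\big\|\chi_{\{|f|>N\}}f\big\|_{L^{p(\cdot),\lambda(\cdot),\theta}}$, which is worth stating.
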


\begin{proof}
We follow the proof of \cite[Lemma 3.7]{Sawano}. Let $f\in L^{p(\cdot),\lambda(\cdot), \theta}$ be such that
\[
\lim\limits_{N\to \infty}
\left\| \chi_{\{|f|>N\}}f \right\|_{L^{p(\cdot),\lambda(\cdot), \theta}}=0.
\]
For each $N\in \mathbb{N}$, define $f_N:=\chi_{\{ 	|f|\le N 	\}} f$. Then $f_N\in L^\infty$ and
\[ \|f-f_N\|_{L^{p(\cdot),\lambda(\cdot), \theta}}\to 0 \]
as $N\to \infty$. Thus, $f\in\Big[ L^\infty \Big]_{ L^{p(\cdot),\lambda(\cdot), \theta}}$.

Conversely, let $f\in \Big[ L^\infty \Big]_{L^{p(\cdot),\lambda(\cdot), \theta}}$.
For any $\delta >0$, choose
$g \in L^\infty$
such that \[ \|f-g\|_{L^{p(\cdot), \lambda(\cdot), \theta}}<\delta. \]
For every $N\in \mathbb{N}$, we have
\begin{align*}
\left|\chi_{\{|f|>N\}}f\right| \le |f-g| + \left|\chi_{\left\{|f|>N\}\cap \{|g|\le \frac{N}{2}\right\}}g\right|
+ \left|\chi_{\left\{ |g|>\frac{N}{2}\right\}}g\right|.
\end{align*}
On the set
$\{|f|>N\}\cap \{|g|\le \frac{N}{2}\}$, we have
\begin{align*}
|g|\le \frac{N}{2} <\frac{|g|}{2}
\le \frac{|f-g|}{2}+
\frac{|g|}{2},
\end{align*}
and hence, $|g|\le |f-g|$.
Therefore, if $N>2\|g\|_{L^\infty}$, we have
\begin{align}
\left\|\chi_{\{|f|>N\}}f\right\|_{L^{p(\cdot),\lambda(\cdot), \theta}}
&\le
2\left\|f-g\right\|_{L^{p(\cdot),\lambda(\cdot), \theta}}<2\delta.
\end{align}
Thus, $\lim\limits_{N \to \infty} \left\| \chi_{\{|f|>N\}} f \right\|_{L^{p(\cdot),\lambda(\cdot), \theta}}=0$.
\end{proof}

Let us define
\begin{align*}
{\widetilde{L}}^{p(\cdot), \lambda(\cdot), \theta}
&:= \{ f\in L^{p(\cdot),\lambda(\cdot), \theta}: \lim_{c \to 0}
c^{ \frac{\theta}{p_- -c}} \|f\|_{L^{p(\cdot)-c, \lambda(\cdot), \theta}}=0\},\\
{\overline{L}}^{p(\cdot), \lambda(\cdot), \theta}
&:= \left\{ f\in L^{p(\cdot),\lambda(\cdot), \theta}: \lim\limits_{N\to \infty}
\left\| \chi_{\{|f|>N\}} f \right \|_{L^{p(\cdot),\lambda(\cdot), \theta}}=0 \right\}.
\end{align*}

The main statement of this note reeds as follows:

\begin{theorem} \label{main}
Let $(X, d, \mu)$ be an SHT with $\mu(X)<\infty$,
and let $p(\cdot) \in P(X)$.
Suppose that $p(\cdot) \in {{\mathcal{P}}}_{\mu}^{\log}(X)$, $\lambda(\cdot) \in [0,1]$ and $\theta>0$. Then
\begin{equation}\label{maineqiality}
\Big[L^\infty\Big]_{L^{p(\cdot),\lambda(\cdot), \theta}}
=
\Big[L^{p(\cdot), \lambda(\cdot)}\Big]_{L^{p(\cdot),\lambda(\cdot), \theta}}.
 \end{equation}
Or equivalently,
\begin{equation}\label{coroll}
\widetilde{L}^{p(\cdot), \lambda(\cdot), \theta}
=
\overline{L}^{p(\cdot), \lambda(\cdot), \theta}.
\end{equation}
\end{theorem}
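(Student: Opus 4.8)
The plan is to establish the circular chain of inclusions
\[
\overline{L}^{p(\cdot),\lambda(\cdot),\theta} = \Big[L^\infty\Big]_{L^{p(\cdot),\lambda(\cdot),\theta}} \subseteq \Big[L^{p(\cdot),\lambda(\cdot)}\Big]_{L^{p(\cdot),\lambda(\cdot),\theta}} \subseteq \widetilde{L}^{p(\cdot),\lambda(\cdot),\theta} \subseteq \overline{L}^{p(\cdot),\lambda(\cdot),\theta},
\]
which forces all four spaces to coincide and yields \eqref{maineqiality} and \eqref{coroll} simultaneously. Three of the four links are already available: the first equality is exactly Proposition \ref{Sawano-Result}, while Proposition \ref{varepsilon0} gives the third inclusion $[L^{p(\cdot),\lambda(\cdot)}] \subseteq \widetilde{L}$. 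Hence the entire theorem reduces to the elementary first inclusion together with the single genuine inclusion $\widetilde{L}^{p(\cdot),\lambda(\cdot),\theta} \subseteq \overline{L}^{p(\cdot),\lambda(\cdot),\theta}$.

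For the first inclusion I would first record that $L^\infty \hookrightarrow L^{p(\cdot),\lambda(\cdot)}$: since $\mu(X)<\infty$ and $p(\cdot)\in\mathcal{P}_\mu^{\log}(X)$, for $f\in L^\infty$ the quantity $\mu(B)^{-\lambda(x)/p(x)}\|f\chi_B\|_{L^{p(\cdot)}} \le \|f\|_{L^\infty}\,\mu(B)^{-\lambda(x)/p(x)}\|\chi_B\|_{L^{p(\cdot)}}$ is bounded uniformly in $B$ after inserting the Diening estimates \eqref{Dien} and \eqref{Dien1} and using $\lambda(\cdot)\le 1$ together with $\mu(B)\le\mu(X)$. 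Taking closures gives $[L^\infty]\subseteq[L^{p(\cdot),\lambda(\cdot)}]$. Since $\overline{L} = [L^\infty]$ is closed, the remaining inclusion $\widetilde{L}\subseteq\overline{L}$ will follow once I show that every $f\in\widetilde{L}$ satisfies $\lim_{N\to\infty}\|\chi_{\{|f|>N\}}f\|_{L^{p(\cdot),\lambda(\cdot),\theta}}=0$.

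To prove this, fix $f\in\widetilde{L}$ and $\delta>0$. Because $\lim_{c\to0}c^{\theta/(p_--c)}\|f\|_{L^{p(\cdot)-c,\lambda(\cdot)}}=0$, I can choose $c_0\in(0,p_--1)$ with $c^{\theta/(p_--c)}\|f\|_{L^{p(\cdot)-c,\lambda(\cdot)}}<\delta/2$ for all $c\in(0,c_0)$, and then split the defining supremum of the grand norm of $\chi_{\{|f|>N\}}f$ at $c_0$. On the range $c\in(0,c_0)$ the monotonicity of the Morrey norm, combined with $|\chi_{\{|f|>N\}}f|\le|f|$, yields the bound $\delta/2$ \emph{uniformly in} $N$. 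On the range $c\in[c_0,p_--1)$ the prefactor $c^{\theta/(p_--c)}$ is bounded, so it suffices to show $\sup_{c\in[c_0,p_--1)}\|\chi_{\{|f|>N\}}f\|_{L^{p(\cdot)-c,\lambda(\cdot)}}\to0$ as $N\to\infty$. Here I would exploit the pointwise gain $|f|^{-(c-c_0/2)}<N^{-(c-c_0/2)}$ on $\{|f|>N\}$: writing $\widetilde{p}(\cdot):=p(\cdot)-c_0/2$, for every ball $B$ one has $\int_{B\cap\{|f|>N\}}|f|^{p(\cdot)-c}\,d\mu \le N^{-(c-c_0/2)}\int_B|f|^{\widetilde{p}(\cdot)}\,d\mu$. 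Since the embedding $L^{p(\cdot),\lambda(\cdot),\theta}\hookrightarrow L^{\widetilde{p}(\cdot),\lambda(\cdot)}$ (valid as $c_0/2<p_--1$) places $f$ in $L^{\widetilde{p}(\cdot),\lambda(\cdot)}$, the right-hand modular is finite and controlled by $\|f\|_{L^{\widetilde{p}(\cdot),\lambda(\cdot)}}$; converting back through the norm--modular relations of Proposition \ref{esti} produces a decay factor $N^{-\beta_0}$ with $\beta_0:=(c_0/2)/(p_+-c_0)>0$, uniform in $c\in[c_0,p_--1)$.

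The hard part will be this last conversion: turning the modular decay into \emph{Morrey-norm} decay uniformly over all balls $B$ while keeping the weights $\mu(B)^{-\lambda(x)/(p(x)-c)}$ under control. In the constant-exponent case this is transparent, since H\"older's inequality supplies the compensating factor $\mu(B)^{(c-c_0/2)/[p(p-c)]}$, and combining it with the weight leaves a \emph{nonnegative} power of $\mu(B)\le\mu(X)$, hence a bound independent of $B$. For variable $p(\cdot)$ and $\lambda(\cdot)$ the exponents $p(x)$, $\widetilde{p}(x)$ and $p_\pm(B)$ no longer agree, and the residual powers of $\mu(B)$ must be absorbed by the Diening-type estimates \eqref{Dien} and \eqref{Dien1} (which is precisely where the SHT and log-H\"older hypotheses enter), together with $\lambda(\cdot)\le1$ and $\mu(B)\le\mu(X)<\infty$. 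Once these weight computations are carried out uniformly in $c$ and $B$, the large-$c$ contribution is at most $C\,N^{-\beta_0}\|f\|_{L^{\widetilde{p}(\cdot),\lambda(\cdot)}}<\delta/2$ for $N$ large, so the full grand norm of $\chi_{\{|f|>N\}}f$ falls below $\delta$, completing $\widetilde{L}\subseteq\overline{L}$ and hence the theorem.
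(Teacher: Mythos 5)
Your proposal is correct, and at the architectural level it is the paper's proof: both arguments run the cycle $[L^\infty]\subseteq[L^{p(\cdot),\lambda(\cdot)}]\subseteq\widetilde L^{p(\cdot),\lambda(\cdot),\theta}\subseteq [L^\infty]=\overline L^{p(\cdot),\lambda(\cdot),\theta}$, with the first inclusion from $L^\infty\hookrightarrow L^{p(\cdot),\lambda(\cdot)}$ (using $\lambda(\cdot)\le 1$, $\mu(X)<\infty$ and \eqref{Dien}--\eqref{Dien1}), the second from Proposition \ref{varepsilon0}, and the identification $[L^\infty]=\overline L^{p(\cdot),\lambda(\cdot),\theta}$ from Proposition \ref{Sawano-Result}; both also split the supremum over $c$ at a threshold and kill the small-$c$ range by the definition of $\widetilde L^{p(\cdot),\lambda(\cdot),\theta}$ plus the lattice property. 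Where you genuinely diverge is the large-$c$ estimate. The paper takes a truncation $g$ with $|g|+|f-g|=|f|$ and bounds $\|f-g\|_{L^{p(\cdot)-c,\lambda(\cdot)}}\le C\bigl(\|f-g\|_{L^1}+\|f-g\|_{L^{p_+}}\bigr)$ uniformly for $c\in(\delta_0,p_--1)$, which implicitly requires placing $f-g$ in the fixed Lebesgue space $L^{p_+}$ and then controlling the weight $\mu(B)^{-\lambda(x)/(p(x)-c)}$ against it. You instead use the pointwise gain $|f|^{p(x)-c}\le N^{-(c-c_0/2)}|f|^{p(x)-c_0/2}$ on $\{|f|>N\}$ and measure the large-$c$ piece against $L^{p(\cdot)-c_0/2,\lambda(\cdot)}$, membership in which is guaranteed by the embedding $L^{p(\cdot),\lambda(\cdot),\theta}\hookrightarrow L^{p(\cdot)-c,\lambda(\cdot)}$. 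This buys you something real: the argument never leaves the Morrey scale, the formal powers of $\mu(B)$ cancel exactly against the weight (so $\lambda$ near $1$ causes no trouble), and the only corrections are Diening-type factors, uniform in $c$ for exactly the reason the paper records. The cost is the modular-to-norm bookkeeping you flag as the hard part; that computation is tedious but standard (Proposition \ref{esti} plus \eqref{Dien}--\eqref{Dien1} applied to $1/(p(\cdot)-c)$ with constants independent of $c$) and contains no obstruction, so the plan closes.
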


Taking $\lambda(\cdot)=0$ in Theorem \ref{main}, we have
the corresponding result for GVELS.
\begin{corollary} Let $p(\cdot) \in P(X)$ and let $(X, d, \mu)$ be an SHT
with $\mu(X)<\infty$.
Suppose that $p(\cdot)\in {{\mathcal{P}}}_{\mu}^{\log}(X)$ and $\theta>0$.
Then
\begin{equation}\label{maineqiality1}
\Big[L^\infty\Big]_{L^{p(\cdot),\theta}}
=
\Big[L^{p(\cdot)}\Big]_{L^{p(\cdot),\theta}}.
\end{equation}
Or equivalently,
\begin{equation}\label{coroll1}
\widetilde{L}^{p(\cdot), \theta}
=
\overline{L}^{p(\cdot), \theta}.
\end{equation}
\end{corollary}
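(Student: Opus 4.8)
The plan is to obtain this corollary directly as the specialization $\lambda(\cdot)\equiv 0$ of Theorem \ref{main}, so that the entire task reduces to confirming that the hypotheses transfer and that each space appearing in the corollary is precisely the $\lambda\equiv 0$ instance of its counterpart in the theorem.

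First I would verify the hypotheses. Theorem \ref{main} requires $(X,d,\mu)$ to be an SHT with $\mu(X)<\infty$, $p(\cdot)\in P(X)\cap {{\mathcal{P}}}_{\mu}^{\log}(X)$, $\lambda(\cdot)\in[0,1]$, and $\theta>0$. Every assumption on $(X,d,\mu)$, $p(\cdot)$, and $\theta$ is literally the same as the corresponding assumption in the corollary, so it suffices to check that the choice $\lambda(\cdot)\equiv 0$ is admissible. This is immediate, since $0\in[0,1]$; moreover the constant function $\lambda\equiv 0$ trivially belongs to ${{\mathcal{P}}}_{\mu}^{\log}(X)$ because $|\lambda(x)-\lambda(y)|=0$ for all $x,y\in X$, so the Diening-type estimates \eqref{Dien} and \eqref{Dien1} for $\lambda$ hold with constant $A=1$ and the equivalent-norm reduction recorded in Section 2 is available should the proof of Theorem \ref{main} invoke it.

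Next I would match the spaces. As noted in Section 2, setting $\lambda(\cdot)\equiv 0$ collapses the factor $\mu(B(x,r))^{-\lambda(x)/(p(x)-c)}$ to $1$ in every defining supremum, giving $L^{p(\cdot),0}=L^{p(\cdot)}$ and $L^{p(\cdot),0,\theta}=L^{p(\cdot),\theta}$. Consequently the two closures specialize as $\Big[L^\infty\Big]_{L^{p(\cdot),0,\theta}}=\Big[L^\infty\Big]_{L^{p(\cdot),\theta}}$ and $\Big[L^{p(\cdot),0}\Big]_{L^{p(\cdot),0,\theta}}=\Big[L^{p(\cdot)}\Big]_{L^{p(\cdot),\theta}}$, while the auxiliary classes become $\widetilde{L}^{p(\cdot),0,\theta}=\widetilde{L}^{p(\cdot),\theta}$ and $\overline{L}^{p(\cdot),0,\theta}=\overline{L}^{p(\cdot),\theta}$.

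Finally I would invoke Theorem \ref{main}. Its conclusion \eqref{maineqiality} with $\lambda\equiv 0$ reads exactly as \eqref{maineqiality1}, and the equivalent reformulation \eqref{coroll} becomes \eqref{coroll1}. Since the argument is a pure substitution, there is no genuine obstacle here; the only point deserving attention is confirming that the constant exponent $\lambda\equiv 0$ lies in the admissible class and that the space identifications above are exact, both of which follow at once from the definitions in Section 2.
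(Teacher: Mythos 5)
Your proposal is correct and matches the paper exactly: the paper derives this corollary simply by taking $\lambda(\cdot)\equiv 0$ in Theorem \ref{main}, which is precisely your argument. Your extra checks (that $0\in[0,1]$, that the norms collapse to the Lebesgue-space case, and the unneeded but harmless observation that $\lambda\equiv 0$ satisfies the log-H\"older condition) are sound and only make the substitution explicit.
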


{\em Proof} of Theorem \ref{main}. Since $p\in {{\mathcal{P}}}_{\mu}^{\log}(X)$
we have that \eqref{Dien} and \eqref{Dien1} hold for $r(\cdot)=1/p(\cdot)$.
Since $\mu(X)<\infty$, it can be checked that the embedding
$$\Big[L^\infty \Big]_{L^{p(\cdot),\lambda(\cdot), \theta}} \hookrightarrow \Big[ L^{p(\cdot), \lambda(\cdot)} \Big]_{L^{p(\cdot),\lambda(\cdot), \theta}} $$
is valid. Further, by Proposition \ref{varepsilon0} we have that
\begin{equation}\label{subset}
\Big[ L^{p(\cdot), \lambda(\cdot)} \Big]_{L^{p(\cdot),\lambda(\cdot), \theta}} \subset {{\widetilde{L}}^{p(\cdot), \lambda(\cdot), \theta}}.
\end{equation}
Thus, it remains to show that
\begin{equation}\label{embedding-2}
{{\widetilde{L}}^{p(\cdot), \lambda(\cdot), \theta}} \subset \Big[ L^{\infty} \Big]_{L^{p(\cdot),\lambda(\cdot), \theta}}.
\end{equation}

Indeed, let
$f\in {{\widetilde{L}}^{p(\cdot), \lambda(\cdot), \theta}}$.
Take $\eta>0$ arbitrarily. It is enough to find
$g\in L^{\infty}$ such that
$$ \sup_{0<c<p_--1} c^{\frac{\theta}{p_- - c}} \| f-g\|_{L^{p(\cdot)-c, \lambda(\cdot)}} < \eta. $$

First
since
$f\in {{\widetilde{L}}^{p(\cdot), \lambda(\cdot), \theta}}$,
we can choose $\delta_0$ so that
$$ \sup_{0< c< \delta_0} c^{\frac{\theta}{p_- - c}} \| f\|_{ L^{p(\cdot)-c, \lambda(\cdot)} } \leq
\frac{\eta}{2}.
$$
Further, let us take $g \in L^{\infty}$ so that
$|g|+|f-g|=|f|$
and that
$$ c^{\frac{\theta}{p_--c}} \|f- g\|_{ L^{p(\cdot)-c, \lambda(\cdot), \theta}}
\leq C(p, X)(\|f- g\|_{ L^{1} }+\|f-g\|_{L^{p_+}})c^{\frac{\theta}{p_- - c}} \leq \frac{\eta}{2}, \;\;\; $$
$c\in (\delta_0, p_- -1)$,
where the constant $C(p, X)$ depends only on $X$ and $p(\cdot)$.

The latter inequality follows from \eqref{Dien} and \eqref{Dien1} for $r(\cdot)= 1/(p(\cdot)-c)$ and the condition $\lambda(\cdot) \in [0,1]$. Observe here that the constant $A$ in \eqref{Dien} and \eqref{Dien1}
for $r(\cdot)= 1/(p(\cdot)-c)$ does not depend on $c$ because the constant $a$ in \eqref{WL-+} for $r(\cdot)= 1/(p(\cdot)-c)$
can be chosen so that it will be independent of $c$.

Consequently, for all $c\in (0,p_--1)$ and such an $g$ we find that
\begin{align*}
\lefteqn{
c^{\frac{\theta}{p_- - c}} \| f-g\|_{L^{p(\cdot)-c, \lambda(\cdot)}}
}\\
&\leq
\sup_{\tilde{c}\in (0,\delta_0)}\tilde{c}^{\frac{\theta}{p_- -\tilde{c}}} \| f\|_{L^{p(\cdot)-\tilde{c}, \lambda(\cdot)}} +
\sup_{\tilde{c}\in (\delta_0,p_--1)} \tilde{c}^{\frac{\theta}{p_- - \tilde{c}}} \|f-g\|_{L^{p(\cdot)-\tilde{c}, \lambda(\cdot)}}\\
&\leq
\frac{\eta}{2} +
\frac{\eta}{2}= \eta.
\end{align*}

Proposition \ref{Sawano-Result} completes the proof of the theorem.
$\Box$

\section{Duality and Predualily in GVELS on $\sigma$-finite measure spaces}

Now we introduce grand variable exponent Lebesgue spaces
(shortly GVELS) defined on a measure space $(X, \Sigma, \mu)$
with a $\sigma$-finite $\mu$ measure
which is not always finite.
We note that the definition of $L^{p(\cdot)}$
remains unchanged.
\begin{definition}
Let $p(\cdot) \in P(X)$.
Suppose that $a<p_--1$ and $\theta \in {\mathbb R}$.
Then we define
\[ \|f\|_{{\mathcal{L}}^{p(\cdot),\theta, a}}:= \|f\|_{{\mathcal{L}}^{p(\cdot),\theta, a}} \equiv \sup_{0<\kappa \le a} \kappa^{\frac{\theta}{p_- -\kappa}}
\|f\|_{L^{p(\cdot)- \kappa}}. \]
for a $\mu$-measurable function $f$. Here, as before, $\kappa$ is constant. The space ${\mathcal{L}}^{p(\cdot),\kappa, a}$ denotes the set of all $f \in L^1+L^\infty$
for which the norm $\|f\|_{{\mathcal{L}}^{p(\cdot),\theta;a}}$ is finite.
\end{definition}

For the constant exponent $p$
an alternative approach to define grand Lebesgue space was given in \cite{SaUm}.

\begin{remark}
In the previous definition it is not assumed that $X$ is a finite measure space. Here $a$ is a parameter $a \in (0,p_- -1)$.
\end{remark}

We need also the following definition:

\begin{definition}
Let $(X,\Sigma,\mu)$ be a $\sigma$-finite measure space. Let $p(\cdot)$ be a variable exponent satisfying
$1<p_- \le p_+<\infty$,
and let $a<p_--1$ and $\theta \in {\mathbb R}$.
\begin{enumerate}
\item

A $\mu$-measurable
function $b$ is said to be a $(p(\cdot),\theta;a,\kappa)$ block
if there exists $\kappa \in (0,a]$ such that \[
\|b\|_{L^{(p(\cdot)-\kappa)'}} \le \kappa^{\frac{\theta}{p_--\kappa}}.
\]

\item

The space
${\mathcal{H}}^{p(\cdot),\theta;a}$ is the set of all $f \in L^1+L^{\infty}$ for which there exist
$\{\lambda_j\}_{j=1}^\infty \in \ell^1$
and a sequence $\{b_j\}_{j=1}^\infty$ of $(p(\cdot),\theta;a,\kappa_j)$ blocks with $\kappa_j \in (0,a]$ for which
\[ f=\sum_{j=1}^\infty \lambda_j b_j \]
$\mu$-a.e..

The norm of a function
$f \in {\mathcal H}^{p(\cdot),\theta;a}$ is defined to be

\[ \|f\|_{{\mathcal H}^{p(\cdot),\theta;a}} =
\inf\left\{ \sum_{j=1}^\infty|\lambda_j|\,:\, f=\sum_{j=1}^\infty \lambda_j b_j \right\}, \]
where one considers the infimum over all admissible expressions.
\end{enumerate}
\end{definition}

In the sequel we denote by $Y^*$ dual space of $Y$.
We can identify
that dual space of $L^{p(\cdot)}$ with $L^{p'(\cdot)}$ for $p(\cdot) \in P(X)$
 (see e.g., \cite{DHHR,Sawano-Survey}).
\vskip+0.2cm

The following proposition is an immediate consequence of the duality of $L^{p(\cdot)}$ and $L^{p'(\cdot)}$.

\begin{proposition}
Let $(X,\Sigma,\mu)$ be a $\sigma$-finite measure space,
and
let $p(\cdot)\in P(X)$, $a<p_--1$ and $\theta \in {\mathbb R}$.
\begin{enumerate}

\item
We have
${\mathcal{L}}^{p(\cdot),\theta;a}=\Big({\mathcal{H}}^{p(\cdot),\theta;a}\Big)^*$
with equivalence of norms.
In particular,
$f\in {\mathcal{L}}^{p(\cdot),\theta;a}$ induces a bounded linear functional $L_f$
on 	
${\mathcal{H}}^{p(\cdot),\theta;a}$
and 	
$\|f\|_{{\mathcal{L}}^{p(\cdot),\theta;a}} \sim \|L_f\|_{\Big({\mathcal{H}}^{p(\cdot),\theta;a}\Big)^*}$,
if
$f \in {\mathcal{L}}^{p(\cdot),\theta;a}$.
\item
Any bounded linear functional $L$ on ${\mathcal{H}}^{p(\cdot),\theta;a}$
is realized as $L=L_f$ for some $f \in {\mathcal{L}}^{p(\cdot),\theta;a}$.
\end{enumerate}
\end{proposition}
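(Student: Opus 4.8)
The plan is to regard ${\mathcal{H}}^{p(\cdot),\theta;a}$ as a block (Köthe-type) space whose dual is recovered by integration against ${\mathcal{L}}^{p(\cdot),\theta;a}$, and to run the standard two-sided argument: first that every $f\in{\mathcal{L}}^{p(\cdot),\theta;a}$ induces a bounded functional $L_f$, then the matching lower norm bound, and finally surjectivity. A preliminary observation I would record once and use everywhere is that the Hölder constant $1+\frac{1}{(p(\cdot)-\kappa)_-}-\frac{1}{(p(\cdot)-\kappa)_+}$ is dominated by $C_a:=1+\frac{1}{p_--a}$ uniformly for $\kappa\in(0,a]$, since $p_--\kappa\ge p_--a>1$; the same uniform control applies to the constant in the norm-conjugate formula for $L^{p(\cdot)-\kappa}$.

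For the easy inclusion, let $f\in{\mathcal{L}}^{p(\cdot),\theta;a}$ and let $b$ be a $(p(\cdot),\theta;a,\kappa)$ block. Hölder's inequality in $L^{p(\cdot)-\kappa}$ together with the block normalization gives $\big|\int_X f b\,d\mu\big|\le C_a\,\|f\|_{L^{p(\cdot)-\kappa}}\,\|b\|_{L^{(p(\cdot)-\kappa)'}}\le C_a\,\kappa^{\frac{\theta}{p_--\kappa}}\|f\|_{L^{p(\cdot)-\kappa}}\le C_a\,\|f\|_{{\mathcal{L}}^{p(\cdot),\theta;a}}$. Summing against an $\ell^1$ decomposition $g=\sum_j\lambda_j b_j$ and taking the infimum over representations yields $|L_f(g)|\le C_a\,\|f\|_{{\mathcal{L}}^{p(\cdot),\theta;a}}\,\|g\|_{{\mathcal{H}}^{p(\cdot),\theta;a}}$, so $L_f$ is bounded with $\|L_f\|\le C_a\|f\|_{{\mathcal{L}}^{p(\cdot),\theta;a}}$. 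For the reverse estimate I would use the norm-conjugate formula $\|f\|_{L^{p(\cdot)-\kappa}}\le C_a\sup\{|\int_X f h\,d\mu|:\|h\|_{L^{(p(\cdot)-\kappa)'}}\le 1\}$: given $\kappa$ and $\varepsilon>0$, pick a near-extremal $h$ and set $b:=\kappa^{\frac{\theta}{p_--\kappa}}h$, which is a $(p(\cdot),\theta;a,\kappa)$ block, hence $\|b\|_{{\mathcal{H}}^{p(\cdot),\theta;a}}\le 1$. Testing $L_f$ against $b$ gives $\kappa^{\frac{\theta}{p_--\kappa}}\|f\|_{L^{p(\cdot)-\kappa}}\le C_a\|L_f\|+\varepsilon$; letting $\varepsilon\to 0$ and taking the supremum over $\kappa\in(0,a]$ produces $\|f\|_{{\mathcal{L}}^{p(\cdot),\theta;a}}\le C_a\|L_f\|$. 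Together these establish part (1) up to surjectivity, i.e.\ the embedding onto its image with equivalence of norms.

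It then remains to prove part (2), surjectivity, which I expect to be the main obstacle. Let $L$ be a bounded functional on ${\mathcal{H}}^{p(\cdot),\theta;a}$. Scaling a single block shows that the inclusion $L^{(p(\cdot)-\kappa)'}\hookrightarrow{\mathcal{H}}^{p(\cdot),\theta;a}$ is continuous with $\|h\|_{{\mathcal{H}}^{p(\cdot),\theta;a}}\le\kappa^{-\frac{\theta}{p_--\kappa}}\|h\|_{L^{(p(\cdot)-\kappa)'}}$, so for each fixed $\kappa\in(0,a]$ the restriction of $L$ is a bounded functional on $L^{(p(\cdot)-\kappa)'}$ of norm at most $\kappa^{-\frac{\theta}{p_--\kappa}}\|L\|$. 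Since $1<(p(\cdot)-\kappa)'_-\le(p(\cdot)-\kappa)'_+<\infty$, the duality $\big(L^{(p(\cdot)-\kappa)'}\big)^*=L^{p(\cdot)-\kappa}$ furnishes a representative $f_\kappa\in L^{p(\cdot)-\kappa}$ with $L(h)=\int_X f_\kappa h\,d\mu$ there and $\|f_\kappa\|_{L^{p(\cdot)-\kappa}}\le C_a\,\kappa^{-\frac{\theta}{p_--\kappa}}\|L\|$. The crux is to glue the family $\{f_\kappa\}$ into one function: testing against $\chi_E$ for sets $E$ of finite measure, which lie in every $L^{(p(\cdot)-\kappa)'}$ and hence in ${\mathcal{H}}^{p(\cdot),\theta;a}$, gives $\int_E f_{\kappa_1}\,d\mu=L(\chi_E)=\int_E f_{\kappa_2}\,d\mu$ for all such $E$, so by $\sigma$-finiteness $f_{\kappa_1}=f_{\kappa_2}$ $\mu$-a.e. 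Writing $f$ for the common value, the per-$\kappa$ bounds become $\kappa^{\frac{\theta}{p_--\kappa}}\|f\|_{L^{p(\cdot)-\kappa}}\le C_a\|L\|$, whence $f\in{\mathcal{L}}^{p(\cdot),\theta;a}$; a routine splitting of $X$ into $\{|f|\le 1\}$ and $\{|f|>1\}$ (using $p_--\kappa>1$) confirms $f\in L^1+L^\infty$.

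Finally, since the finite linear combinations of blocks are dense in ${\mathcal{H}}^{p(\cdot),\theta;a}$ (tails of an $\ell^1$ decomposition have arbitrarily small ${\mathcal{H}}$-norm) and $L$ agrees with the continuous functional $L_f$ on every block, the two coincide on all of ${\mathcal{H}}^{p(\cdot),\theta;a}$, giving $L=L_f$. The genuinely delicate step is the gluing of the $f_\kappa$ across the continuum of exponents: one must verify their consistency on common finite-measure test functions and invoke $\sigma$-finiteness to pass to a single representative. Everything else reduces to the duality of $L^{p(\cdot)}$ with $L^{p'(\cdot)}$ and to the uniform-in-$\kappa$ control of the Hölder and norm-conjugate constants secured at the outset.
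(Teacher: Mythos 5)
Your proof is correct and follows essentially the same route as the paper's: H\"older's inequality against a single block (with the constant controlled uniformly in $\kappa\in(0,a]$) for the embedding ${\mathcal{L}}^{p(\cdot),\theta;a}\subset({\mathcal{H}}^{p(\cdot),\theta;a})^*$, and, for the converse, restriction of $L$ to each $L^{(p(\cdot)-\kappa)'}$, the duality $(L^{(p(\cdot)-\kappa)'})^*=L^{p(\cdot)-\kappa}$, and gluing of the representatives $f_\kappa$ via consistency on finite-measure test sets. You supply two details the paper leaves implicit --- the lower bound $\|f\|_{{\mathcal{L}}^{p(\cdot),\theta;a}}\lesssim\|L_f\|$ via near-extremal test functions, and the density of finite block combinations used to conclude $L=L_f$ on all of ${\mathcal{H}}^{p(\cdot),\theta;a}$ --- but the substance is the same.
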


\begin{proof}
Take $f\in {\mathcal{L}}^{p(\cdot), \theta; a}$ and
a $(p(\cdot),\theta;a,\kappa)$ block $b$.
Then there is $\kappa \in (0,a]$ such that
\[
\|b\|_{L^{(p(\cdot)-\kappa)'}} \le \kappa^{\frac{\theta}{p_--\kappa}}.
\]
Hence by H\"older's inequality we have that
\begin{eqnarray*}
\int\limits_X |b(x) f(x)| d\mu(x)
\leq
c_p
\|b\|_{L^{(p(\cdot)-\kappa)'}}\|f \|_{L^{p(\cdot)-\kappa}}
\leq
c_p
\kappa^{\frac{\theta}{p_- -1}} \|f \|_{L^{p(\cdot)-\kappa}}
\leq
c_p
\|f \|_{{\mathcal{L}}^{p(\cdot), \theta; a}}.
\end{eqnarray*}
Hence, ${\mathcal{L}}^{p(\cdot),\theta;a} \subset \Big({\mathcal{H}}^{p(\cdot),\theta;a}\Big)^*$.

To see that ${\mathcal{L}}^{p(\cdot),\theta;a} \supset \Big({\mathcal{H}}^{p(\cdot),\theta;a}\Big)^*$
we take $L \in \Big({\mathcal{H}}^{p(\cdot),\theta;a}\Big)^*$.
Consider the mapping
$\varphi \in L^{(p(\cdot)-\kappa)'} \mapsto L\varphi \in {\mathbb C}$.
Then there exists $g_{\kappa}$ such that
$$ \| g_{\kappa}\|_{L^{p(\cdot)- \kappa}} \leq
C\| L\|_{{\mathcal{H}}^{p(\cdot),\theta;a}} \kappa^{\frac{-\theta}{p_- - \kappa}}
$$
and
$$
L\varphi =
\int\limits_X g_{\kappa}(x) \varphi (x) d\mu(x), \;\;\; \varphi \in L^{(p(\cdot)-\kappa)'}.
$$
Since
\[
\int\limits_X g_{\kappa}(x) \varphi (x) d\mu(x)
=
\int\limits_X g_{\kappa^\dagger}(x) \varphi (x) d\mu(x)
\quad
\varphi \in L^{(p(\cdot)-\kappa)'} \cap L^{(p(\cdot)-\kappa^\dagger)'},
\]
it thus follows that $g_{\kappa}=g_{\kappa^\dagger}$
for $1<\kappa,\kappa^\dagger<p_--1$.
Hence,
by considering $\{g_\kappa\}_{\kappa \in (1,p_--1) \cap {\mathbb Q}}$
we conclude that there is $g\in {\mathcal{L}}^{p(\cdot), \theta; a}$ such that
$$
\| g\|_{{\mathcal{L}}^{p(\cdot), \theta; a}} \leq
C\| L\|_{{\mathcal{H}}^{p(\cdot),\theta;a}}.
$$
\end{proof}

\begin{lemma}
Let $p(\cdot) \in P(X)$, and let $b$ be a $(p(\cdot),\theta;a,\kappa)$ block.
If $l \in {\mathbb N}$ satisfies $2^{-l}a \le \kappa \le 2^{-l+1}a$,
 then there exists a decomposition 	$b=b_1+b_2$ 	
of $\mu$-measurable functions $b_1,b_2$ 	such that
\[ 	\|b_1\|_{L^{(p(\cdot)-2^{-l}a)'}} 	\le A(2^{-l}a)^{\frac{\theta}{p_--2^{-l}a}},
\]
\[	\|b_2\|_{L^{(p(\cdot)-2^{-l+1}a)'}} 	\le A(2^{-l+1}a)^{\frac{\theta}{p_--2^{-l+1}a}}. 	\]

Here $A$ is a universal constant depending on $p(\cdot)$.


\end{lemma}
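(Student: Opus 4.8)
The plan is to truncate $b$ at height one, setting $b_1:=b\,\chi_{\{|b|>1\}}$ and $b_2:=b\,\chi_{\{|b|\le 1\}}$, and to verify the two required norm bounds directly from the definition of the Luxemburg norm by comparing modular \emph{integrands pointwise}, rather than passing through the norm--modular inequalities of Proposition \ref{esti}. Throughout I abbreviate $\kappa_-:=2^{-l}a$, $\kappa_+:=2^{-l+1}a=2\kappa_-$, and write $q(\cdot):=(p(\cdot)-\kappa)'$, $q_-(\cdot):=(p(\cdot)-\kappa_-)'$, $q_+(\cdot):=(p(\cdot)-\kappa_+)'$, together with $M:=\kappa^{\theta/(p_--\kappa)}$ and $T_\mp:=\kappa_\mp^{\theta/(p_--\kappa_\mp)}$. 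Since $a<p_--1$, every value $p(x)-\kappa'$ with $\kappa'\in(0,a]$ lies in $[p_--a,p_+]\subset(1,\infty)$, so all three conjugate exponents stay in a fixed compact subinterval $[\sigma,\Sigma]$ of $(1,\infty)$ depending only on $p(\cdot)$ and $a$; moreover $\kappa_-\le\kappa\le\kappa_+$ forces $q_-\le q\le q_+$ pointwise.

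First I would record the two elementary sign facts produced by the truncation: on $\{|b|>1\}$ one has $q_-(x)-q(x)\le 0$, hence $|b|^{q_-(x)-q(x)}\le 1$, and on $\{|b|\le 1\}$ one has $q_+(x)-q(x)\ge 0$, hence again $|b|^{q_+(x)-q(x)}\le 1$. The block hypothesis $\|b\|_{L^{q}}\le M$ is equivalent to $S_{q}(b/M)\le 1$. To obtain $\|b_1\|_{L^{q_-}}\le A\,T_-$ it therefore suffices to show $S_{q_-}\!\big(b_1/(A T_-)\big)\le 1$, and for this I would prove the pointwise domination $|b(x)|^{q_-(x)}/(A T_-)^{q_-(x)}\le |b(x)|^{q(x)}/M^{q(x)}$ on $\{|b|>1\}$; integrating then gives $S_{q_-}(b_1/(AT_-))\le S_q(b/M)\le 1$. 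Using the sign fact, this pointwise inequality reduces to the purely scalar estimate $M^{q(x)}\le (A T_-)^{q_-(x)}$, i.e. $M^{q(x)/q_-(x)}\le A\,T_-$ uniformly in $x$. The treatment of $b_2$ is identical, with $q_-,T_-$ replaced by $q_+,T_+$ and the reduction becoming $M^{q(x)/q_+(x)}\le A\,T_+$.

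The heart of the argument, and the step I expect to be the main obstacle, is the uniform scalar bound $M^{q(x)/q_-(x)}\le A\,T_-$ (and its $q_+$ analogue). Writing it as a comparison of powers of $\kappa$, I would introduce $\alpha(x):=\tfrac{\theta}{p_--\kappa}\cdot\tfrac{q(x)}{q_-(x)}$ and $\beta:=\tfrac{\theta}{p_--\kappa_-}$, so the claim is $\kappa^{\alpha(x)}\le A\,\kappa_-^{\beta}$. The crucial point is that $\alpha$ and $\beta$ coincide exactly at the endpoint $\kappa=\kappa_-$ (there $q=q_-$, so $q/q_-=1$ and $\alpha=\beta$), while $\alpha(x)$ has $\kappa$-derivative bounded uniformly in $x$ on the admissible range, all denominators being bounded below by $p_--a-1>0$. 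Hence $|\alpha(x)-\beta|\le C|\kappa-\kappa_-|\le C\kappa_-$. Setting $s:=\kappa/\kappa_-\in[1,2]$ and taking logarithms, $\log\big(\kappa^{\alpha(x)}/\kappa_-^{\beta}\big)=(\alpha(x)-\beta)\log\kappa_-+\alpha(x)\log s$; the second summand is bounded since $\alpha$ is bounded, and the first is $O(\kappa_-|\log\kappa_-|)$, which stays bounded on $(0,a]$. This yields $A$, depending only on $p(\cdot)$, $a$ and $\theta$ (hence only on $p(\cdot)$, with $a,\theta$ fixed). The $q_+$/$T_+$ estimate is proved the same way, matching now at $\kappa=\kappa_+$ and using $|\kappa-\kappa_+|\le\kappa_-$.

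The decisive structural inputs are thus (i) the comparability $\kappa_-\le\kappa\le 2\kappa_-$ furnished by the dyadic localization of $\kappa$, and (ii) the cancellation $\alpha\equiv\beta$ at the matching endpoint, which converts a would-be blow-up $\kappa_-^{O(1)}$ into the harmless factor $\kappa_-^{O(\kappa_-)}=e^{O(\kappa_-\log\kappa_-)}$. I would stress that routing the argument through Proposition \ref{esti} instead of the pointwise modular comparison is too lossy here: the gap $\Sigma-\sigma$ between the global infimum and supremum of the variable conjugate exponent does not close and would destroy the precise power of $\kappa$. Working at the level of the modular integrands is exactly what makes the powers of $\kappa$ align.
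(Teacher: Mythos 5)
Your proof is correct, and it takes a genuinely different --- and considerably more explicit --- route than the paper's. The paper disposes of the lemma by a soft duality argument: it writes $L^{(p(\cdot)-2^{-l}a)'}\cap L^{(p(\cdot)-2^{-l+1}a)'}\approx\bigl(L^{p(\cdot)-2^{-l}a}+L^{p(\cdot)-2^{-l+1}a}\bigr)'\supset\bigl(L^{p(\cdot)-\kappa}\bigr)'\supset L^{(p(\cdot)-\kappa)'}$ ``with embedding constants independent of $l$'' and declares the result immediate. As printed that chain is doubtful: what the decomposition $b=b_1+b_2$ actually requires is the \emph{sum} embedding $L^{(p(\cdot)-\kappa)'}\subset L^{(p(\cdot)-2^{-l}a)'}+L^{(p(\cdot)-2^{-l+1}a)'}$, not an intersection, and the middle inclusion would need $L^{p(\cdot)-2^{-l}a}+L^{p(\cdot)-2^{-l+1}a}\subset L^{p(\cdot)-\kappa}$, which fails on sets of infinite (or even finite) measure; the repaired soft argument would instead dualize $L^{p(\cdot)-2^{-l}a}\cap L^{p(\cdot)-2^{-l+1}a}\subset L^{p(\cdot)-\kappa}$. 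Your truncation at height one is exactly the standard constructive proof of that sum embedding, so in effect you supply both the correct statement and the decomposition the paper leaves implicit. More importantly, you carry out the quantitative work hidden in ``constants independent of $l$'': reducing the Luxemburg-norm bounds to the pointwise comparison of modular integrands and then to the scalar inequality $M^{q(x)/q_{\mp}(x)}\le A\,T_{\mp}$, and exploiting the cancellation $\alpha=\beta$ at the matching endpoint so that the discrepancy is only $\kappa_-^{O(\kappa_-)}=e^{O(\kappa_-\log\kappa_-)}$, which is where the dyadic localization $2^{-l}a\le\kappa\le 2^{-l+1}a$ enters and why $A$ is uniform in $l$ (and in the sign of $\theta$). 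What the paper's approach buys, once repaired, is brevity; what yours buys is an explicit decomposition, a transparent source for the uniformity of $A$, and independence from any duality identification of sum and intersection spaces.
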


\begin{proof}
We note that
\begin{align*}
L^{(p(\cdot)-2^{-l}a)'} \cap L^{(p(\cdot)-2^{-l+1}a)'}
&\approx
\left(L^{p(\cdot)-2^{-l}a}+L^{p(\cdot)-2^{-l+1}a}\right)'\\
&\supset
\Big( L^{p(\cdot)-\kappa}\Big)'\\
&\supset L^{(p(\cdot)-\kappa)'}
\end{align*}
with the embedding constants independent of $l$.
Thus, the result is immediate.
\end{proof}

\begin{corollary}\label{cor}
A function $f \in L^1 + L^\infty$ belongs to ${\mathcal{H}}^{p(\cdot),\theta;a}$
if and only if there exist
$\{\lambda_{2^{-l}a}\}_{l \in{\mathbb N}_0}$ and $\{b_{2^{-l}a}\}_{l \in{\mathbb N}_0}$
for which
\[ f=\sum_{l=0}^\infty \lambda_{2^{-l}a} b_{2^{-l}a} \]
$\mu$-a.e..
If this is the case, then one can arrange that
\[ (2^{-l}a)^{-\frac{\theta}{p_--2^{-l}a}} \|b_{2^{-l}a}\|_{L^{(p(\cdot)-2^{-l}a)'}} \le 1 \]
and that
\[ \sum_{l=0}^\infty |\lambda_{2^{-l}a}| \le A\|f\|_{{\mathcal{H}}^{p(\cdot),\theta;a}} \]
for some positive constant $A$.
\end{corollary}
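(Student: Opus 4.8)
The plan is to show that restricting the block parameters to the dyadic values $\kappa=2^{-l}a$, $l\in\N_0$, alters ${\mathcal H}^{p(\cdot),\theta;a}$ only up to equivalence of norms, with the preceding Lemma as the engine that converts one arbitrary block into two dyadic ones. The \emph{if} part is immediate: if $f=\sum_{l=0}^\infty\lambda_{2^{-l}a}b_{2^{-l}a}$ with $(2^{-l}a)^{-\frac{\theta}{p_--2^{-l}a}}\|b_{2^{-l}a}\|_{L^{(p(\cdot)-2^{-l}a)'}}\le 1$, then since $2^{-l}a\in(0,a]$ for every $l\ge 0$, each $b_{2^{-l}a}$ is a genuine $(p(\cdot),\theta;a,2^{-l}a)$ block. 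Hence the expansion is an admissible representation, so $f\in{\mathcal H}^{p(\cdot),\theta;a}$ and $\|f\|_{{\mathcal H}^{p(\cdot),\theta;a}}\le\sum_{l\ge 0}|\lambda_{2^{-l}a}|$. The real content is the \emph{only if} part together with the two quantitative claims.

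For the \emph{only if} part I would fix $\varepsilon>0$ and choose, by definition of the norm, a representation $f=\sum_{j=1}^\infty\lambda_j b_j$ ($\mu$-a.e.) with $(p(\cdot),\theta;a,\kappa_j)$ blocks $b_j$, $\kappa_j\in(0,a]$, and $\sum_j|\lambda_j|\le(1+\varepsilon)\|f\|_{{\mathcal H}^{p(\cdot),\theta;a}}$. For each $j$ pick the unique $l_j\ge 1$ with $2^{-l_j}a\le\kappa_j\le 2^{-l_j+1}a$ and apply the Lemma to write $b_j=b_j'+b_j''$, where $A^{-1}b_j'$ is a normalized dyadic block of level $l_j$ and $A^{-1}b_j''$ a normalized dyadic block of level $l_j-1$ (both levels lie in $\N_0$). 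I then collect pieces by level: for $m\in\N_0$ set
\[
A\,g_m:=\sum_{j:\,l_j=m}\lambda_j b_j'+\sum_{j:\,l_j=m+1}\lambda_j b_j'',
\qquad
\Lambda_m:=\sum_{j:\,l_j=m}|\lambda_j|+\sum_{j:\,l_j=m+1}|\lambda_j|,
\]
so that every summand is $A$ times a normalized level-$m$ block. Because $L^{(p(\cdot)-2^{-m}a)'}$ is a Banach space (note $p_--2^{-m}a>1$, as $2^{-m}a\le a<p_--1$), the triangle inequality gives $\|g_m\|_{L^{(p(\cdot)-2^{-m}a)'}}\le(2^{-m}a)^{\frac{\theta}{p_--2^{-m}a}}\Lambda_m$. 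Setting $\lambda_{2^{-m}a}:=A\Lambda_m$ and $b_{2^{-m}a}:=g_m/\Lambda_m$ produces a level-$m$ block with the required normalization $(2^{-m}a)^{-\frac{\theta}{p_--2^{-m}a}}\|b_{2^{-m}a}\|_{L^{(p(\cdot)-2^{-m}a)'}}\le 1$. The $\ell^1$ bookkeeping is then clean: each index $j$ feeds its mass into exactly two levels, $l_j$ (via $b_j'$) and $l_j-1$ (via $b_j''$), so $\sum_{m\ge 0}\Lambda_m=2\sum_j|\lambda_j|$ and hence $\sum_{m\ge 0}|\lambda_{2^{-m}a}|=A\sum_m\Lambda_m=2A\sum_j|\lambda_j|\le 2A(1+\varepsilon)\|f\|_{{\mathcal H}^{p(\cdot),\theta;a}}$; letting $\varepsilon\to 0$ yields the claimed bound with a constant depending only on $p(\cdot)$.

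The step I expect to be the main obstacle is verifying that the regrouped series actually reconstructs $f$, i.e. $f=\sum_{m\ge 0}\lambda_{2^{-m}a}b_{2^{-m}a}=\sum_{m\ge 0}A\,g_m$ $\mu$-a.e. This is a rearrangement of the $\mu$-a.e.\ convergent series $\sum_j\lambda_j(b_j'+b_j'')$ — whose partial sums coincide with those of $\sum_j\lambda_j b_j$ and hence tend to $f$ — into infinitely many \emph{infinite} groups, so it is not automatic. I would secure it by proving $\mu$-a.e.\ absolute convergence of the double series $\sum_j|\lambda_j|(|b_j'|+|b_j''|)$: reduce to an exhausting sequence $X_k\uparrow X$ with $\mu(X_k)<\infty$ (available since $\mu$ is $\sigma$-finite), then use Tonelli and Hölder's inequality in $L^{p(\cdot)-2^{-m}a}$ to bound $\int_{X_k}(|b_j'|+|b_j''|)\,d\mu$ by a constant times $\|\chi_{X_k}\|_{L^{p(\cdot)-2^{-m}a}}(2^{-l_j}a)^{\frac{\theta}{p_--2^{-l_j}a}}$, whence $\sum_j|\lambda_j|(\cdots)<\infty$ on each $X_k$. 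Once pointwise absolute convergence holds, every regrouping converges to the same value $f(x)$, and the level-$m$ series defining $g_m$ — which already converges in $L^{(p(\cdot)-2^{-m}a)'}$ — agrees $\mu$-a.e.\ with its pointwise sum. The one subtle point in this last estimate is the behaviour of the factor $(2^{-l_j}a)^{\frac{\theta}{p_--2^{-l_j}a}}$ as $l_j\to\infty$, which is harmless for $\theta\ge 0$ but must be tracked carefully when $\theta<0$, the regime allowed in the $\sigma$-finite setting.
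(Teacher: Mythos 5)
Your proof is correct and follows exactly the route the paper intends: the paper explicitly omits the proof of this corollary, but the preceding Lemma is stated precisely so that each $(p(\cdot),\theta;a,\kappa_j)$ block can be split into two dyadic-level blocks and the pieces regrouped level by level, which is what you carry out (with the right normalizations and the factor $2$ in the $\ell^1$ bookkeeping). Your additional care in justifying the regrouping of the $\mu$-a.e.\ convergent series via pointwise absolute convergence on an exhausting sequence, and your flag that the factor $(2^{-l}a)^{\frac{\theta}{p_--2^{-l}a}}$ misbehaves as $l\to\infty$ when $\theta<0$, address genuine points the paper silently glosses over --- indeed the paper's own use of $\chi_G\in{\mathcal L}^{p(\cdot),\theta;a}$ in the Fatou-property proof already implicitly requires $\theta\ge 0$.
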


We omit the detail of the proof of this corollary.
\begin{proposition}
	The space ${\mathcal{H}}^{p(\cdot),\theta;a}$ enjoys with the Fatou property;	if 	$0 \le f_1 \le f_2 \le \cdots \le f_j \le \cdots$ 	is an increasing sequence of ${\mathcal{H}}^{p(\cdot),\theta;a}$, 	which is bounded in ${\mathcal{H}}^{p(\cdot),\theta;a}$, then $\displaystyle \lim_{j \to \infty}f_j \in {\mathcal{H}}^{p(\cdot),\theta;a}$ 	and $\displaystyle \left\|\lim_{j \to \infty}f_j\right\|_{{\mathcal{H}}^{p(\cdot),\theta;a}} 	 =\lim_{j \to \infty}\left\|f_j\right\|_{{\mathcal{H}}^{p(\cdot),\theta;a}}$.
\end{proposition}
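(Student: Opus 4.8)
The plan is to deduce the Fatou property from the fact that $\mathcal{H}^{p(\cdot),\theta;a}$ is a \emph{perfect} (K\"othe) space, i.e.\ that it coincides, with equality of norms, with the associate space of $\mathcal{L}^{p(\cdot),\theta;a}$. Concretely, for $f\in L^1+L^\infty$ set
\[ N(f):=\sup\Big\{\, \textstyle\int_X |f(x)g(x)|\,d\mu(x)\;:\;\|g\|_{\mathcal{L}^{p(\cdot),\theta;a}}\le 1 \,\Big\}. \]
First I would record the easy half, $N(f)\le \|f\|_{\mathcal{H}^{p(\cdot),\theta;a}}$ for $f\in\mathcal{H}^{p(\cdot),\theta;a}$, which is just the H\"older estimate already used in the preceding duality proposition together with $\mathcal{H}^*=\mathcal{L}$. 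The central claim, treated below, is the reverse: if $N(f)<\infty$ then $f\in\mathcal{H}^{p(\cdot),\theta;a}$ and $\|f\|_{\mathcal{H}^{p(\cdot),\theta;a}}=N(f)$.

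Granting this, the theorem follows quickly. Write $f:=\lim_{j\to\infty}f_j=\sup_j f_j$, the pointwise increasing limit. Since the $\mathcal{L}^{p(\cdot),\theta;a}$-norm of $g$ depends only on $|g|$, in the supremum defining $N$ one may restrict to $g\ge 0$; for each such $g$ the monotone convergence theorem gives $\int_X f_j g\,d\mu\uparrow\int_X f g\,d\mu$. Interchanging the two suprema yields
\[ N(f)=\sup_{g\ge 0,\ \|g\|_{\mathcal{L}}\le 1}\ \sup_{j}\int_X f_j g\,d\mu=\sup_j N(f_j)=\sup_j \|f_j\|_{\mathcal{H}^{p(\cdot),\theta;a}}, \]
which is finite by hypothesis and equals $\lim_{j}\|f_j\|_{\mathcal{H}^{p(\cdot),\theta;a}}$, the sequence of norms being nondecreasing because $0\le f_j\le f_{j+1}$. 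The perfectness claim then places $f$ in $\mathcal{H}^{p(\cdot),\theta;a}$ and gives $\|f\|_{\mathcal{H}^{p(\cdot),\theta;a}}=N(f)=\lim_j\|f_j\|_{\mathcal{H}^{p(\cdot),\theta;a}}$, which is exactly the assertion.

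It thus remains to prove the perfectness claim, and this is where the real work lies. I would argue constructively via the block structure. By Corollary~\ref{cor} each $f_j$ (reduced to $f_j\ge 0$) admits a dyadic decomposition $f_j=\sum_{l=0}^\infty \lambda^{(j)}_{2^{-l}a}\,b^{(j)}_{2^{-l}a}$ with nonnegative coefficients obeying $\sum_{l}\lambda^{(j)}_{2^{-l}a}\le A\,\|f_j\|_{\mathcal{H}^{p(\cdot),\theta;a}}$ and with each $b^{(j)}_{2^{-l}a}$ a normalized block in $L^{(p(\cdot)-2^{-l}a)'}$. Because $1<(p(\cdot)-2^{-l}a)_-\le (p(\cdot)-2^{-l}a)_+<\infty$, every space $L^{(p(\cdot)-2^{-l}a)'}$ is reflexive, so along a diagonal subsequence I can extract weak limits $b^{(j)}_{2^{-l}a}\rightharpoonup \beta_l$ in $L^{(p(\cdot)-2^{-l}a)'}$ and $\lambda^{(j)}_{2^{-l}a}\to \mu_l$ in $[0,\infty)$ for every $l$, where weak lower semicontinuity of the norm keeps each $\beta_l$ a block of the same level and Fatou for series gives $\sum_l \mu_l\le \liminf_j\sum_l\lambda^{(j)}_{2^{-l}a}<\infty$.

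The main obstacle is the identification $f=\sum_{l}\mu_l\beta_l$ $\mu$-a.e., i.e.\ checking that these weak limits reassemble to $f$. I would test against a bounded $\phi$ of finite-measure support: then $\int_X f_j\phi\,d\mu\to\int_X f\phi\,d\mu$ by monotone convergence, while on the right-hand side each term obeys the uniform bound $|\lambda^{(j)}_{2^{-l}a}\int_X b^{(j)}_{2^{-l}a}\phi\,d\mu|\le \lambda^{(j)}_{2^{-l}a}(2^{-l}a)^{\frac{\theta}{p_--2^{-l}a}}\|\phi\|_{L^{p(\cdot)-2^{-l}a}}$, whose $l$-sum is dominated uniformly in $j$ (using $\sum_l\lambda^{(j)}_{2^{-l}a}\le A\sup_j\|f_j\|_{\mathcal{H}^{p(\cdot),\theta;a}}$, the boundedness of the level weights for $\theta>0$, and \eqref{embedding}). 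Dominated convergence for series then lets me pass to the limit term by term and conclude $\int_X f\phi\,d\mu=\int_X\big(\sum_l\mu_l\beta_l\big)\phi\,d\mu$ for all such $\phi$, hence $f=\sum_l\mu_l\beta_l$ a.e. This exhibits a block decomposition of $f$, so $f\in\mathcal{H}^{p(\cdot),\theta;a}$; combined with the monotonicity bound $\|f\|_{\mathcal{H}^{p(\cdot),\theta;a}}\ge\sup_j\|f_j\|_{\mathcal{H}^{p(\cdot),\theta;a}}$ and the dual representation $\|f\|_{\mathcal{H}^{p(\cdot),\theta;a}}=N(f)$, the norm equality follows. The delicate points I expect to fight with are precisely this term-by-term passage to the limit (the uniform majorant, and the care needed when $\theta\le 0$) and the bookkeeping that converts the constant-$A$ decomposition into the sharp identity $\|f\|_{\mathcal{H}^{p(\cdot),\theta;a}}=N(f)$ through the duality of the preceding proposition.
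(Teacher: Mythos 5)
The engine of your argument is the same as the paper's: decompose each $f_j$ dyadically via Corollary \ref{cor}, use reflexivity/weak compactness of the spaces $L^{(p(\cdot)-2^{-l}a)'}$ to extract, along a diagonal subsequence, weak limits $\beta_l$ of the blocks and limits $\mu_l$ of the coefficients, keep $\sum_l\mu_l$ under control by Fatou for series, and identify $\sum_l\mu_l\beta_l$ with $\lim_j f_j$ by testing against functions supported on sets of finite measure, using a uniform $2^{-l\delta}$-type majorant and dominated convergence for series to pass to the limit term by term. The paper does exactly this (testing against $\chi_G$ and invoking $\chi_G\in L^{p_+}\cap L^{p_-}\subset\mathcal{L}^{p(\cdot),\theta;a}$), so on this core your sketch and the published proof coincide.

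What you add is an outer layer through the associate space $N(\cdot)$ and a ``perfectness'' claim, and this is where a genuine gap sits. Your route to the norm identity requires the \emph{exact} equality $\|f\|_{\mathcal{H}^{p(\cdot),\theta;a}}=N(f)$, but nothing in the paper (or in your sketch) delivers it: the duality proposition only gives equivalence of norms, with a constant $c_p=1+1/p_--1/p_+>1$ coming from H\"older in one direction and the constant $A$ from the re-blocking lemma in the other. With only $N(f)\sim\|f\|_{\mathcal{H}^{p(\cdot),\theta;a}}$, your interchange of suprema yields $\|f\|_{\mathcal{H}^{p(\cdot),\theta;a}}\sim\lim_j\|f_j\|_{\mathcal{H}^{p(\cdot),\theta;a}}$ rather than equality, so the detour does not buy you more than the direct argument already gives. (You flag this yourself as a ``delicate point,'' but flagging it is not resolving it; note also that your claim that $\|f_j\|_{\mathcal{H}^{p(\cdot),\theta;a}}$ is nondecreasing uses a lattice/ideal property of the block space that is not immediate from the definition.) The practical advice is to drop the $N(\cdot)$ wrapper and run the block-decomposition argument directly on the sequence $(f_j)$, which is precisely what the paper does; if you do want the sharp identity, you would first have to prove isometric, not merely isomorphic, duality, which is a separate and harder task.
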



\begin{proof}
We follow the idea of \cite{ISY14}.
We may assume
$\|f_j\|_{{\mathcal H}^{p(\cdot),\theta;a}}<1$.
By Corollary \ref{cor} we can find an expression of each $f_j$;
\[ f_j=\sum_{l=0}^\infty \lambda^j_{2^{-l}a} b^j_{2^{-l}a} \]
$\mu$-a.e.,
where
\[ (2^{-l}a)^{\frac{\theta}{p_--2^{-l}a}} \|b^j_{2^{-l}a}\|_{L^{(p(\cdot)-2^{-l}a)'}} \le 1 \]
and that
\[ \sum_{l=0}^\infty |\lambda^j_{2^{-l}a}| \le A\|f_j\|_{{\mathcal{H}}^{p(\cdot),\theta;a}}. \]
Thanks to the weak compactness of $L^{(p(\cdot)-2^{-l}a)'}$ for $l \in {\mathbb N}_0$,
 after we pass to a subsequence,
we can assume that
\[ \lim_{j \to \infty}b^j_{2^{-l}a}=b_{2^{-l}a} \]
exists in the weak topology of $L^{(p(\cdot)-{2^{-l}a})'}$ for each $l \in {\mathbb N}_0$.
Using the Weierstrass theorem, we can assume that
\[ \lim_{j \to \infty}\lambda^j_{2^{-l}a}=\lambda_{2^{-l}a} \]
exist.
We set
\[ f\equiv \sum_{l=0}^\infty \lambda_{2^{-l}a} b_{2^{-l}a} \in {\mathcal{H}}^{p(\cdot),\theta, a}. \]

To prove $\displaystyle \lim_{j \to \infty}f_j \in {\mathcal{H}}^{p(\cdot),\theta;a}$ and $\displaystyle \left\|\lim_{j \to \infty}f_j\right\|_{ {\mathcal{H}}^{p(\cdot),\theta;a}} =\lim_{j \to \infty}\left\|f_j\right\|_{{\mathcal{H}}^{p(\cdot),\theta;a}}$,
it suffices to show that
\[ f=\lim_{j \to \infty}f_j \]
almost everywhere, because
\[ \liminf_{j \to \infty} \sum_{l=0}^\infty |\lambda^j_{2^{-l}a}| \ge \sum_{l=0}^\infty |\lambda_{2^{-l}a}| \]
by the Fatou theorem. To this end, it suffices to show that
\[ \int\limits_G f(x)\,d\mu(x)= \lim_{j \to \infty} \int\limits_G f_j(x)\,d\mu(x) \]
for any measurable set $G$ with finite $\mu$-measure,
or equivalently,
\begin{equation}\label{sum}
\sum_{l=0}^\infty \lambda^j_{2^{-l}a} \int_{G}b^j_{2^{-l}a}(x)\,d\mu(x) \to
\sum_{l=0}^\infty \lambda_{2^{-l}a} \int_{G}b_{2^{-l}a}(x)\,d\mu(x),
\end{equation}
since we know that
$\chi_G \in L^{p_+} \cap L^{p_-} \subset {\mathcal{L}}^{p(\cdot),\theta;a}$.

We also know that
\[ \left|\int_{G}b^j_{2^{-l}a}(x)\,d\mu(x)\right| \lesssim 2^{-l\delta} \]
for some $\delta>0$ by the H\"{o}lder inequality with the constants independent of $j$ and $l$ and
\[ \lambda^j_{2^{-l}a} \int_{G}b^j_{2^{-l}a}(x)\,d\mu(x) \to \lambda_{2^{-l}a} \int_{G}b_{2^{-l}a}(x)\,d\mu(x). \]
Thus, we are in the position of applying the Lebesgue convergence theorem
to see that \eqref{sum} holds.

\end{proof}

We define
$[L^{p(\cdot)}]_{{\mathcal{L}}^{p(\cdot),\theta;a}}$ which is the closure of $L^{p(\cdot)} \cap L^{p_--a}$
in ${\mathcal{L}}^{p(\cdot),\theta;a}$.
The intersection with $L^{p_--a}$
is taken because the space is defined on a set which might be of infinite measure.
This operation is necessary because
$L^{p(\cdot)}$ is not always included in
$[L^{p(\cdot)}]_{{\mathcal{L}}^{p(\cdot),\theta;a}}$.
In fact in the case of constant exponent,
$L^{p}$ is included in
${{\mathcal{L}}^{p(\cdot),\theta;a}}$
if and only if the underlying measure is finite.
Now we will see that $g$ induces a linear functional $K_g$
on ${\mathcal{L}}^{p(\cdot),\theta;a}$ and $\|g\|_{{\mathcal{H}}^{p(\cdot),\theta;a}(X,\mu)}
\sim \|K_g\|_{({\mathcal{L}}^{p(\cdot),\theta;a})^*}$, if $g \in {\mathcal{H}}^{p(\cdot),\theta;a}$.
In the case $\mu(X)< \infty$ we will assume that
$[L^{p(\cdot)}]_{{\mathcal{L}}^{p(\cdot), \theta, a}}$ is the closure of $L^{p(\cdot)}$ in
${\mathcal L}^{p(\cdot), \theta, a}$.

\begin{theorem}
Suppose that $\mu$ is $\sigma$- finite.
Let $p(\cdot)$ be a variable exponent satisfying
$1<p_- \le p_+<\infty$, and let $0<a<p_--1$ and $\theta \in {\mathbb R}$. 	With norm equivalence, we have
$\Big( [L^{p(\cdot)}]_{{\mathcal{L}}^{p(\cdot),\theta;a}}\Big)^*={\mathcal{H}}^{p(\cdot),\theta;a}$. More precisely,
	any bounded linear functional $K$ on $[L^{p(\cdot)}]_{{\mathcal{L}}^{p(\cdot),\theta;a}}$
	is realized as $K=K_g$
for some $g \in {\mathcal{H}}^{p(\cdot),\theta;a}$.

\end{theorem}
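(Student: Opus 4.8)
The plan is to establish the claimed identity as two inclusions, handling the easy one directly and the hard one by a discretization followed by a weak-$*$ compactness argument. Throughout write $M:=\big[L^{p(\cdot)}\big]_{{\mathcal{L}}^{p(\cdot),\theta;a}}$ and $w_l:=(2^{-l}a)^{\frac{\theta}{p_--2^{-l}a}}$ for $l\in\N_0$. First I would prove ${\mathcal{H}}^{p(\cdot),\theta;a}\subset M^*$. Given $g\in{\mathcal{H}}^{p(\cdot),\theta;a}$, Corollary \ref{cor} lets me write $g=\sum_{l}\lambda_{2^{-l}a}b_{2^{-l}a}$ with $\|b_{2^{-l}a}\|_{L^{(p(\cdot)-2^{-l}a)'}}\le w_l$ and $\sum_l|\lambda_{2^{-l}a}|\le A\|g\|_{{\mathcal{H}}^{p(\cdot),\theta;a}}$. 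For $f\in L^{p(\cdot)}\cap L^{p_--a}$, applying H\"older's inequality term by term gives $|\int_X fg\,d\mu|\le c_p\sum_l|\lambda_{2^{-l}a}|\,w_l\|f\|_{L^{p(\cdot)-2^{-l}a}}\le c_p(\sum_l|\lambda_{2^{-l}a}|)\,\|f\|_{{\mathcal{L}}^{p(\cdot),\theta;a}}$, so $K_g(f):=\int_X fg\,d\mu$ extends by density from $L^{p(\cdot)}\cap L^{p_--a}$ to a bounded functional on $M$ with $\|K_g\|_{M^*}\ls\|g\|_{{\mathcal{H}}^{p(\cdot),\theta;a}}$. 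The assignment $g\mapsto K_g$ is injective because $L^{p(\cdot)}\cap L^{p_--a}$ contains every simple function supported on a set of finite $\mu$-measure, whence $K_g\equiv 0$ forces $g=0$ $\mu$-a.e.

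For the reverse inclusion I would pass to the discretized model supplied by the dyadic lemma above and Corollary \ref{cor}. Introduce the weighted direct sums $W:=\big(\bigoplus_{l\in\N_0}L^{(p(\cdot)-2^{-l}a)'}\big)_{\ell^1}$, with $\|(\beta_l)_l\|_W:=\sum_l w_l^{-1}\|\beta_l\|_{L^{(p(\cdot)-2^{-l}a)'}}$, and $Z:=\big(\bigoplus_{l\in\N_0}L^{p(\cdot)-2^{-l}a}\big)_{\ell^\infty}$, with $\|(f_l)_l\|_Z:=\sup_l w_l\|f_l\|_{L^{p(\cdot)-2^{-l}a}}$. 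Since $1<p_--a\le p(\cdot)-2^{-l}a\le p_+<\infty$, each slice $L^{p(\cdot)-2^{-l}a}$ is the dual of $L^{(p(\cdot)-2^{-l}a)'}$, so $W^*=Z$ by the standard duality between $\ell^1$- and $\ell^\infty$-direct sums. By Corollary \ref{cor} the summation map $\Sigma:(\beta_l)_l\mapsto\sum_l\beta_l$ carries $W$ onto ${\mathcal{H}}^{p(\cdot),\theta;a}$ and realizes it, up to equivalence of norms, as the quotient $W/\ker\Sigma$; dually the diagonal $\Delta f:=(f)_l$ coincides with the adjoint $\Sigma^*$ and embeds ${\mathcal{L}}^{p(\cdot),\theta;a}=({\mathcal{H}}^{p(\cdot),\theta;a})^*$ (with the discretized norm, equivalent to the original by the dyadic lemma) as the weak-$*$ closed subspace $(\ker\Sigma)^\perp\subset Z$.

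Now fix $K\in M^*$; the goal is to produce $g\in{\mathcal{H}}^{p(\cdot),\theta;a}$ with $K=K_g$. Using $\sigma$-finiteness, write $X=\bigcup_k X_k$ with $X_k\uparrow X$ and $\mu(X_k)<\infty$, so that $L^\infty(X_k)\subset L^{p(\cdot)}\cap L^{p_--a}\subset M$. The plan is to represent $K$ on each finite-measure truncation first: transporting $K$ to $\Delta(M)$ and extending it by Hahn--Banach to all of $Z=W^*$, then localizing to $X_k$ and invoking $W^*=Z$ together with the reflexivity of each slice $L^{(p(\cdot)-2^{-l}a)'}$, I would obtain a family $(\beta_l^{(k)})_l$ with $\sum_l w_l^{-1}\|\beta_l^{(k)}\|_{L^{(p(\cdot)-2^{-l}a)'}}\ls\|K\|_{M^*}$ and $K(f)=\sum_l\int_X f\,\beta_l^{(k)}\,d\mu$ for $f\in L^\infty(X_k)$. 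Since each $L^{(p(\cdot)-2^{-l}a)'}$ is reflexive, a diagonal argument over $l$ and $k$ yields weak limits $\beta_l:=\lim_k\beta_l^{(k)}$; setting $g:=\sum_l\beta_l$, I would verify $g\in{\mathcal{H}}^{p(\cdot),\theta;a}$ from Corollary \ref{cor} and the Fatou property of ${\mathcal{H}}^{p(\cdot),\theta;a}$ proved above, which also gives $\|g\|_{{\mathcal{H}}^{p(\cdot),\theta;a}}\ls\|K\|_{M^*}$. Passing to the limit in the pairing exactly as in the proof of the Fatou property — the Lebesgue dominated convergence theorem on each fixed $X_k$, using the uniform decay $|\int_{X_k}f\,\beta_l^{(k)}\,d\mu|\ls 2^{-l\delta}$ — gives $K(f)=\int_X fg\,d\mu$ first for $f\in\bigcup_k L^\infty(X_k)$ and then, by density, for all $f\in M$. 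Together with the first paragraph this yields both inclusions and the norm equivalence $\|K\|_{M^*}\sim\|g\|_{{\mathcal{H}}^{p(\cdot),\theta;a}}$.

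The main obstacle will be the reverse inclusion, and precisely the a priori bound $\sum_l w_l^{-1}\|\beta_l^{(k)}\|_{L^{(p(\cdot)-2^{-l}a)'}}\ls\|K\|_{M^*}$: a functional bounded only with respect to the \emph{supremum}-over-slices norm of ${\mathcal{L}}^{p(\cdot),\theta;a}$ must be disassembled into an $\ell^1$-summable family of slice functionals, and since $W$ is a genuinely non-reflexive $\ell^1$-sum, a naive Hahn--Banach extension lands only in $W^{**}=Z^*$. The role of the $\sigma$-finite exhaustion and of the Fatou property is exactly to replace this with a weak-$*$ compactness argument forcing the limiting family to lie in $W$, hence $g$ in ${\mathcal{H}}^{p(\cdot),\theta;a}$. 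This is the delicate step, and it is where the hypotheses $0<a<p_--1$ (ensuring $p_--a>1$, hence reflexivity of every slice $L^{(p(\cdot)-2^{-l}a)'}$) and the $\sigma$-finiteness of $\mu$ are essential.
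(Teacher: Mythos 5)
Your first inclusion (${\mathcal{H}}^{p(\cdot),\theta;a}\subset M^*$ via term-by-term H\"older) is fine, and your overall architecture for the converse --- exhaust $X$ by finite-measure sets, truncate, obtain uniform bounds, pass to the limit by the Fatou property of ${\mathcal{H}}^{p(\cdot),\theta;a}$ --- is the same as the paper's. The genuine gap is exactly the step you flag as delicate: the a priori bound $\sum_l w_l^{-1}\|\beta_l^{(k)}\|_{L^{(p(\cdot)-2^{-l}a)'}}\lesssim\|K\|_{M^*}$ is never actually produced. A Hahn--Banach extension of $K$ from $\Delta(M)$ to $Z=W^*$ lives in $Z^*=W^{**}$, and since $W$ is a non-reflexive $\ell^1$-sum we have $W^{**}\supsetneq W$; localizing to $X_k$ does not alter the $\ell^1$-versus-$\ell^\infty$ structure over the index $l$, and reflexivity of the individual slices is irrelevant to that obstruction. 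So no $\ell^1$-summable family $(\beta_l^{(k)})_l$ with the required bound is obtained, and the subsequent diagonal weak-limit and Fatou steps have nothing to act on; ``weak-$*$ compactness forcing the limiting family to lie in $W$'' is asserted rather than proved, since weak-$*$ limits taken in $Z^*$ remain in $Z^*$.

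The paper sidesteps this with two moves you should adopt. First, the candidate $g$ is pinned down at the outset: $K\circ\iota$ is bounded on $L^{p(\cdot)}\cap L^{p_--a}$, so the classical duality $(L^{p(\cdot)})^*=L^{p'(\cdot)}$ yields $g\in L^{p'(\cdot)}+L^{(p_--a)'}$ with $K(f)=\int_X f g\,d\mu$ on this dense subspace; there is no need to reassemble $g$ from slice functionals. Second, the uniform bound on the truncations $g_j:=\chi_{\{|g|\le j\}\cap X_j}g$ --- each of which lies in ${\mathcal{H}}^{p(\cdot),\theta;a}$ trivially, being bounded and supported on a set of finite measure --- comes from the already-proved duality $\bigl({\mathcal{H}}^{p(\cdot),\theta;a}\bigr)^*={\mathcal{L}}^{p(\cdot),\theta;a}$ combined with Hahn--Banach: the canonical embedding of ${\mathcal{H}}^{p(\cdot),\theta;a}$ into its bidual is isometric, so $\|g_j\|_{{\mathcal{H}}^{p(\cdot),\theta;a}}$ is comparable to $\sup\{|\int_X g_j\varphi\,d\mu|:\|\varphi\|_{{\mathcal{L}}^{p(\cdot),\theta;a}}\le 1\}$, and this pairing is controlled by $\|K\|$. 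That bidual argument is the correct replacement for the $\ell^1$ decomposition you could not extract; once it is in place, the Fatou property gives $g\in{\mathcal{H}}^{p(\cdot),\theta;a}$ with $\|g\|_{{\mathcal{H}}^{p(\cdot),\theta;a}}\lesssim\|K\|$, and $K=K_g$ follows by density as in your final step.
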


\begin{proof}
We prove the theorem for $\mu(X)=\infty$. The proof for $\mu(X)< \infty$ is similar.
Let $K$ be a bounded linear functional on $[L^{p(\cdot),\theta}]_{{\mathcal{L}}^{p(\cdot),\theta;a}}$. Then
the inclusion mapping
$\iota:L^{p(\cdot)} \cap L^{p_--a} \hookrightarrow [L^{p(\cdot)}]_{{\mathcal{L}}^{p(\cdot),\theta;a}}$
induces a linear functional $K \in \Big(L^{p(\cdot)}\Big)^*$,
so that there exists $g \in L^{p'(\cdot)}+L^{(p_--a)'}$ such that
\begin{equation}\label{eq:170704-1}
K(f)=\int_X f(x)g(x)\,d\mu(x), \;\;\; f\in L^{p(\cdot)},
\end{equation}
We need to show that $g \in {\mathcal{H}}^{p(\cdot),\theta;a}$.
We define
\[ K_j(f)=\int_{X_j} f(x)\chi_{\{|g| \le j\}}(x)g(x)\,d\mu(x), \;\;\; f\in L^{p(\cdot)}, \]
where $\{X_j\}$ is an increasing sequence of finite measure sets.
Since $\mu(X_j)<\infty$,
we have $\chi_{\{|g| \le j\} \cap X_j}g \in {\mathcal{H}}^{p(\cdot),\theta;a}(X,\mu)$
and the norm is bounded by a constant multiple of $\|K\|$
due to the Hahn-Banach theorem and the duality
${\mathcal{H}}^{p(\cdot),\theta;a}$--${\mathcal{L}}^{p(\cdot),\theta;a}$.
By the Fatou property, we conclude that $g\in {\mathcal{H}}^{p(\cdot),\theta;a}$.
Thus, $K=K_g$ since we can now assume that $f \in [L^{p(\cdot)}]_{{\mathcal{L}}^{p(\cdot),\theta;a}}$ in (\ref{eq:170704-1}).
\end{proof}

\section*{Acknowledgement}

Yoshihro Sawano is partially supported by Grant-in-Aid for Scientific Research (C),
No. 16K05209, Japan Society for
the Promotion of Science.

\end{document}